\documentclass[10pt]{article}
\usepackage{graphicx}
\usepackage{amssymb,amsmath,latexsym,amscd}
\usepackage{amsthm}

\newtheorem{theorem}[equation]{Theorem}
\newtheorem{corollary}[equation]{Corollary}
\newtheorem{lemma}[equation]{Lemma}
\newtheorem{explanation}[equation]{Explanation}

\theoremstyle{definition}\newtheorem{definition}[equation]{Definition}
\theoremstyle{definition}\newtheorem{remark}[equation]{Remark}
\theoremstyle{remark}
\numberwithin{equation}{section}

\newcommand{\Q}{{\mathbb Q}}

\newcommand{\Z}{\mathbb{Z}}
\newcommand{\KK}{\mathbb{K}}
\newcommand{\LL}{\mathbb{L}}
\newcommand{\MM}{\mathbb{M}}
\newcommand{\FF}{\mathbb{F}}

\newcommand{\QQ}{\mathbb{Q}}

\newcommand{\ch}{\rm Char}

\newcommand{\Gal}{{\mathcal Gal}}

\begin{document}
\title{}
\author{Alexander Stolin$^{1}$}
\date{}
\title{Vandiver's Conjecture via K-theory}

\maketitle

$^1$ Department of Mathematical Sciences, Chalmers University of Technology and the University of Gothenburg, 412 96 Gothenburg, Sweden.

\begin{abstract}
From Wikipedia: "In mathematics, the Kummer--Vandiver conjecture, or Vandiver conjecture, states that a prime p does not divide the class number h(R) of the maximal real subfield R  of the p-th cyclotomic field. The conjecture was first made by Ernst Kummer in 1849 December 28 and 1853 April 24 in letters to Leopold Kronecker, reprinted in (Kummer 1975, pages 84, 93, 123--124), and independently proposed around 1920 by Philipp Furtwängler and Harry Vandiver. As of 2011, there is no particularly strong evidence either for or against the conjecture and it is unclear whether it is true or false, though it is likely that counterexamples are very rare."

Kummer verified the conjecture for p less than 200, and Vandiver extended this to p less than 600. Harvey (2008) extended this to primes less than $163\times 10^6$.

In this paper I would like to to prove Vandiver's conjecture and indicate some consequences including
the first case of Fermat's Great Theorem, some properties of the Iwasawa numbers
and to present an exact formula for the p-Sylow subgroup of the class group of
$\Q (\zeta_n ),$   where $\zeta_n^{p^{n+1}}=1$ as an abelian group ({\bf OBS!}
assuming that Vandiver's conjecture is true, a formula for the class group above
as a $\Gamma$-module is well-known. However, the structure of that group as an abelian group
remained unknown).

 \noindent \\
\end{abstract}

\section{Introduction, necessary facts from K-theory}
In what follows we will need a number of facts about Picard groups proved in \cite{S1}.

We will use the following notations.
$C_2$ will be the cyclic group of order $p^2$, where $p$ is an odd prime number.
$\zeta=\zeta_1$ will be a $p$-th root of unity, while $\zeta_2$ will be a primitive $p^2$-root of unity.
\subsection{Review of necessary results of \cite{S1}}
Let us consider the Picard group of the integer group ring $\mathbb{Z} C_2$.
The first observation is
that $Pic(\mathbb{Z} C_2)\cong Pic(A)$, where $A$ can be presented as a Cartesian product of
$\mathbb{Z} [\zeta_1 ]$ and $\mathbb{Z} [\zeta_2]$ over the local ring $\Z[\zeta_1]/(p):=F$.

The corresponding  Mayer--Vietoris sequence reads as
$$
0\to V\to Pic (A)\to Pic (\mathbb{Z} [\zeta_2])\oplus Pic (\mathbb{Z} [\zeta_1])\to 0.
$$
Here $Pic$, the Picard group, is the same as the projective class group or simply the {\it class group} for Dedekind rings.
We will use the standard notation $Cl (D).$

The group $V$ was computed in \cite{KM} for the primes satisfying Vandiver's conjecture.
However, similar computations can be done for any prime.
Let us do them. Let $E_i,\ i=1,2$ be the group of units of $\Z[\zeta_i].$ Abusing notations let us denote their images in
$U(\Z[\zeta_1]/(p))$ also by $E_i,\ i=1,2$. Then by definition $V= U((\Z[\zeta_1])/(p))/(E_1 \times E_2)$. Here $U(R)$ is the group of units
of an abelian ring $R.$

The following result was proved in \cite{S1}:
\begin{theorem}
$V= U(\Z[\zeta_1]/(p))/E_1$.
\end{theorem}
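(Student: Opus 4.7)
The statement is equivalent to the inclusion $\pi_2(E_2) \subseteq \pi_1(E_1)$ inside $U(F)$, where $\pi_1 \colon \Z[\zeta_1] \to F$ is the natural reduction modulo $p$, and $\pi_2 \colon \Z[\zeta_2] \to F$ is the ring homomorphism $\zeta_2 \mapsto \zeta_1$. The latter is well-defined because $\Phi_{p^2}(\zeta_1) = \Phi_p(\zeta_1^p) = \Phi_p(1) = p$, which vanishes in $F$. Note that $\pi_2$ sends $\zeta_1 = \zeta_2^p$ to $\zeta_1^p = 1$, so $\pi_2|_{\Z[\zeta_1]}$ differs from $\pi_1$, and $\pi_2$ is $\mathrm{Gal}(\Q(\zeta_2)/\Q(\zeta_1))$-invariant.

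My first step would be the cyclotomic-unit computation: for $\gcd(a, p) = 1$,
\[ \pi_2\!\left(\frac{1 - \zeta_2^a}{1 - \zeta_2}\right) = \frac{1 - \zeta_1^a}{1 - \zeta_1} = \pi_1(\xi_{a \bmod p}) \in \pi_1(E_1), \]
where $\xi_b := (1 - \zeta_1^b)/(1 - \zeta_1)$. Combined with $\pi_2(\zeta_2) = \pi_1(\zeta_1)$ and $\pi_2(\pm 1) = \pi_1(\pm 1)$, this shows that the cyclotomic subgroup $C_2 \subseteq E_2$ satisfies $\pi_2(C_2) \subseteq \pi_1(E_1)$.

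To extend this to all of $E_2$, I would exploit the Galois-module structure of $U(F)$ under $G := \mathrm{Gal}(\Q(\zeta_1)/\Q) \cong (\Z/p)^*$. Since $|G| = p - 1$ is coprime to $p$, $\FF_p[G]$ is semisimple and $U(F) \cong \FF_p^* \oplus \bigoplus_{k=1}^{p-2} \FF_p(\omega^k)$, each $\omega^k$-eigenspace being one-dimensional. A Frobenius calculation in $F = \FF_p[\varepsilon]/\varepsilon^{p-1}$ (with $\varepsilon = 1 - \zeta_1$, so $\varepsilon^p = 0$) gives $\pi_1(\xi_b)^p = b^p = b$, proving $\FF_p^* \subseteq \pi_1(E_1)$. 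Any $G$-invariant subgroup of $U(F)$ containing $\FF_p^*$ is then determined by its character support in $\{1, \ldots, p-2\}$, so the desired inclusion reduces to matching these two supports.

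The hardest step will be handling the non-cyclotomic units. By the Kummer--Sinnott index formula $[E_2 : C_2] = h^+(\Q(\zeta_{p^2}))$; under Vandiver's conjecture this index is coprime to $p$, so the non-cyclotomic units contribute nothing modulo $p$ and the previous step finishes the argument. For an arbitrary prime, I would combine the norm identity $\pi_2(u)^p = \pi_2(N_{\Q(\zeta_2)/\Q(\zeta_1)}(u)) \in \FF_p^*$ --- which follows from $\pi_2$ being $\mathrm{Gal}(\Q(\zeta_2)/\Q(\zeta_1))$-invariant --- with a character-by-character analysis, aiming to show that any fundamental unit of $\Z[\zeta_2]^+$ outside the cyclotomic units produces only characters already present in the image of the corresponding fundamental units of $\Z[\zeta_1]^+$.
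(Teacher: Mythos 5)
Your reduction of the statement to the inclusion $\pi_2(E_2)\subseteq\pi_1(E_1)$ and your computation for cyclotomic units are correct, but the passage from the cyclotomic units $C_2$ to all of $E_2$ --- which is the entire content of the theorem --- is exactly where the proposal has a genuine gap. The first route you offer invokes Vandiver's conjecture (through $p\nmid [E_2:C_2]=h^+(\QQ(\zeta_{p^2}))$), and that is circular in this paper: the theorem is an unconditional step in the paper's program to \emph{prove} Vandiver's conjecture, and the text stresses that, unlike Kervaire--Murthy, the computation of $V$ must be carried out for an arbitrary prime. The second route is an intention rather than an argument (``aiming to show\ldots''): nothing in the proposal actually controls the image of a non-cyclotomic unit of $\Z[\zeta_2]$ in the exponent-$p$ part $1+\mathfrak m$ of $U(F)$, $\mathfrak m=(\zeta_1-1)F$, which is precisely where the difficulty lives, since $\FF_p^*\subseteq\pi_1(E_1)$ is the easy part.

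The missing idea is the norm-compatibility identity on which the paper's proof rests (stated as the next theorem in the paper and proved in \cite{S1}): for the canonical maps $j_i:\Z[\zeta_i]\to\Z[\zeta_1]/(p)$ and the norm $N:\Z[\zeta_2]\to\Z[\zeta_1]$ one has $j_2(a)=j_1(N(a))$ for \emph{every} $a\in\Z[\zeta_2]$. Since $N$ takes units to units, this yields $j_2(E_2)\subseteq j_1(E_1)$ in one stroke, for every prime, with no case distinction and no appeal to cyclotomic units or class-number information. Note that the norm identity you do write down is strictly weaker and cannot substitute for it: you apply $\pi_2$ (not $\pi_1$) to $N(u)$, and because $\pi_2$ sends $\zeta_1\mapsto 1$, the relation $\pi_2(u)^p=\pi_2(N(u))\in\FF_p^*$ only recovers the Teichm\"uller component of $\pi_2(u)$, which lies in $\pi_1(E_1)$ trivially, and gives no information about its component in $1+\mathfrak m$. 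So your character-by-character analysis would still have to be invented from scratch, whereas the correct congruence $j_2(a)=j_1(N(a))$ makes it unnecessary; if you want to complete your write-up honestly, that identity (checked, say, on $\zeta_2$ and on rational integers, and established in general in \cite{S1}) is the statement you need to prove.
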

The proof is based on the following useful result:
\begin{theorem}
Let $j_i, \ i=1,2$ be canonical maps $j_i:\ \Z[\zeta_i]\to\Z[\zeta_1]/(p)$ and let $N:\ \Z[\zeta_2]\to\Z[\zeta_1]$ be the norm map.
Then $j_2 (a)=j_1 (N(a)).$

\end{theorem}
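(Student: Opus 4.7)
The plan is to show that both $j_2$ and $\psi := j_1\circ N$ are ring homomorphisms $\Z[\zeta_2]\to F$ that agree on the generator $\zeta_2$, which forces $\psi=j_2$. First I would make $j_2$ explicit: since $\Phi_{p^2}(\zeta_1)=1+\zeta_1^p+\cdots+\zeta_1^{(p-1)p}=p$ vanishes in $F=\Z[\zeta_1]/(p)$, the rule $\zeta_2\mapsto\zeta_1$ extends to a well-defined ring homomorphism $\Z[\zeta_2]\to F$, and this is $j_2$. Multiplicativity of $\psi$ is automatic from multiplicativity of $N$. Letting $\sigma(\zeta_2)=\zeta_2^{1+p}$ generate $\mathrm{Gal}(\Q(\zeta_2)/\Q(\zeta_1))$, one computes
$$N(\zeta_2)=\zeta_2^{\sum_{i=0}^{p-1}(1+p)^i}=\zeta_2^{((1+p)^p-1)/p}=\zeta_1,$$
using $(1+p)^p\equiv 1+p^2\pmod{p^3}$; so $\psi(\zeta_2)=\zeta_1=j_2(\zeta_2)$.

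The main obstacle is additivity of $\psi$, i.e.\ $N(a+b)\equiv N(a)+N(b)\pmod{p\Z[\zeta_1]}$. Expanding
$$N(a+b)=\prod_{i=0}^{p-1}(\sigma^i a+\sigma^i b)=\sum_{S\subseteq\{0,\ldots,p-1\}} T_S, \q T_S=\prod_{i\in S}\sigma^i(a)\prod_{j\notin S}\sigma^j(b),$$
the full and empty $S$ contribute $N(a)$ and $N(b)$. A direct check shows $\sigma(T_S)=T_{S+1}$ (cyclic shift of $S$ mod $p$); since $|\langle\sigma\rangle|=p$ is prime, the action on proper nonempty subsets is free, so these subsets decompose into orbits of size $p$, and the sum over each orbit equals $\mathrm{Tr}_{L/K}(T_S)$ with $T_S\in\Z[\zeta_2]$. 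Consequently $N(a+b)-N(a)-N(b)$ is a sum of traces of elements of $\Z[\zeta_2]$.

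The claim therefore reduces to the lemma $\mathrm{Tr}_{L/K}(\Z[\zeta_2])\subseteq p\Z[\zeta_1]$, with $L=\Q(\zeta_2)$, $K=\Q(\zeta_1)$. I would verify this on the $\Z[\zeta_1]$-basis $1,\zeta_2,\ldots,\zeta_2^{p-1}$: using $(1+p)^i\equiv 1+ip\pmod{p^2}$ one obtains $\sigma^i(\zeta_2^k)=\zeta_2^k\zeta_1^{ik}$, whence
$$\mathrm{Tr}(\zeta_2^k)=\zeta_2^k\sum_{i=0}^{p-1}\zeta_1^{ik},$$
which vanishes for $1\le k\le p-1$ and equals $p$ for $k=0$. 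This is a routine calculation, consistent with the well-known different $\mathfrak{d}_{L/K}=(p\zeta_2^{p-1})=p\mathcal{O}_L$. With the lemma in hand, $\psi$ is a ring homomorphism agreeing with $j_2$ on $\zeta_2$, so $\psi=j_2$ everywhere.
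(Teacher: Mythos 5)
Your proof is correct, and it cannot be compared line by line with an argument in the text because the paper gives none: Theorem 1.2 is quoted from \cite{S1} without proof, and the only ingredient of it that the paper actually uses later is the congruence $Norm(a+b)\equiv Norm(a)+Norm(b)\ mod\ (p)$, again attributed to \cite{S1} in Section 2. Your argument is a clean, self-contained substitute. The reduction to showing that $\psi=j_1\circ N$ is a ring homomorphism agreeing with $j_2$ on $\zeta_2$ is sound; the computation $N(\zeta_2)=\zeta_2^{((1+p)^p-1)/p}=\zeta_2^{p}=\zeta_1$ is right; and your proof of additivity mod $p$ — grouping the mixed terms of $\prod_i(\sigma^i a+\sigma^i b)$ into orbits of the cyclic shift, which are free on proper nonempty subsets because $p$ is prime, so that each orbit sums to $\mathrm{Tr}_{L/K}(T_S)$, combined with the basis computation $\mathrm{Tr}_{L/K}(\zeta_2^k)=0$ for $1\le k\le p-1$ and $=p$ for $k=0$, hence $\mathrm{Tr}_{L/K}(\Z[\zeta_2])\subseteq p\Z[\zeta_1]$ — is precisely a proof of the \cite{S1} fact the paper invokes, and it even generalizes layer by layer (replace $1+p$ by $1+p^m$) to the congruence for $Norm_{\KK_n/\KK_0}$ used in Section 2. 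The one point worth stating explicitly is the normalization: the identity $j_2(a)=j_1(N(a))$ depends on the compatible choices $\zeta_1=\zeta_2^p$ and $j_2$ being the Milnor-square map $\zeta_2\mapsto\zeta_1\ mod\ (p)$ (well defined, as you note, because $\Phi_{p^2}(\zeta_1)=p$); with a different identification of $\zeta_1$ inside $\Q(\zeta_2)$ the two sides would differ by a Galois twist. Since you adopt exactly these conventions, and your check at $a=\zeta_2$ is where they are verified, the argument is complete.
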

The structure of $U(\Z[\zeta_1]/(p))$ is well-known:
$$U(\Z[\zeta_1]/(p))\cong \FF_p^*\oplus \FF_p^{p-2} .$$
Here, $\FF_p $ is the prime field of characteristic $p$ and $\FF_p^*$ is the group of its invertible elements.
Now, let us introduce an important number $r_0$. Let $S_i,\ i=1,2$ be the Sylow $p$-part of the class group of $\Z[\zeta_i].$
Let us denote the subgroup of the group ${\ch} (S_1/S_1^p)$ generated by all  $\epsilon\in U(\Z[\zeta_1]):\ \epsilon\equiv 1\ mod\ (p)$
by $({\ch} (S_1/S_1^p))_\epsilon .$
\begin{definition}
$r_0 = log_p (\#({\ch} (S_1/S_1^p)_\epsilon))=\newline
log_p (\#\{\{\epsilon\equiv 1\ mod\ (p)\}/\{\epsilon\equiv 1\ mod\ (\zeta_1 -1)^2\}^p \}) .$
\end{definition}
Now we can formulate an important result:
\begin{theorem}
$V\cong \FF_p^{\frac{p-3}{2} +r_0}.$
\end{theorem}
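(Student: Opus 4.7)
The plan is to exploit the fact that $A := \Z[\zeta_1]/(p)$ is a local Artinian ring with residue field $\FF_p$ and maximal ideal $\fm=(\zeta_1-1)$ satisfying $\fm^{p-1}=0$. Hence $U(A) = \FF_p^* \oplus U^{(1)}$ with $U^{(1)}:=1+\fm$, and the truncated $\log$ series is a well-defined isomorphism $U^{(1)}\xrightarrow{\sim}\fm$ of $\FF_p$-vector spaces of dimension $p-2$. The cyclotomic units $\eta_a = (1-\zeta_1^a)/(1-\zeta_1) = 1+\zeta_1+\cdots+\zeta_1^{a-1}$ lie in $E_1$ and reduce modulo $\fm$ to $a\in\FF_p^*$, so $\pi(E_1)$ already covers the factor $\FF_p^*$. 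Consequently $V$ is an $\FF_p$-vector space isomorphic to $U^{(1)}/\pi(E_1^{(1)})$, where $E_1^{(1)} := E_1 \cap (1+(\zeta_1-1))$, and only its dimension remains to be computed.

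I would then decompose under complex conjugation $c:\zeta_1\mapsto\zeta_1^{-1}$. On the source, $E_1 = \langle\zeta_1\rangle\times E_1^+$ with $E_1^+$ the units of the real subfield, so $E_1^{(1)} = \langle\zeta_1\rangle\times E_1^{+,(1)}$ where $E_1^{+,(1)} := E_1^+\cap(1+(\zeta_1-1))$ is torsion-free (since $-1\not\equiv 1\bmod(\zeta_1-1)$) of Dirichlet rank $(p-3)/2$. On the target, since $p$ is odd, $U^{(1)} = U^{(1)}_+\oplus U^{(1)}_-$ (the $c$-invariant and $c$-anti-invariant subspaces); passing through $\log$ and the additive decomposition $\fm = \fm_+\oplus\fm_-$, one computes $\dim U^{(1)}_+=(p-3)/2$ and $\dim U^{(1)}_-=(p-1)/2$ (the anti-invariant part of $\fm$ is all of the anti-invariant part of $A$, since the map $A^-\to\FF_p$ has image in $\FF_p^-=\{0\}$). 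Since real units map into $U^{(1)}_+$, while $\pi(\zeta_1)\in U^{(1)}_-\setminus\{1\}$, we have $\pi(E_1^{(1)}) = \pi(E_1^{+,(1)})\oplus\langle\pi(\zeta_1)\rangle$, and therefore $V \cong U^{(1)}_+/\pi(E_1^{+,(1)}) \oplus U^{(1)}_-/\langle\pi(\zeta_1)\rangle$.

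The anti-invariant piece is immediate: $\dim U^{(1)}_-/\langle\pi(\zeta_1)\rangle = (p-1)/2-1 = (p-3)/2$. For the invariant piece, the map $E_1^{+,(1)}/(E_1^{+,(1)})^p\to U^{(1)}_+$ has both sides of dimension $(p-3)/2$. Its kernel consists of classes of $u$ with $u\equiv 1\bmod p$; a quick check (if $u = \zeta_1^a u^+\equiv 1\bmod p$ then $\zeta_1^{2a}\equiv 1\bmod p$, forcing $a=0$) shows this equals $\{\epsilon\in E_1:\epsilon\equiv 1\bmod p\}/(E_1^{+,(1)})^p$. Thus $\dim U^{(1)}_+/\pi(E_1^{+,(1)}) = \dim \{\epsilon\equiv 1\bmod p\}/(E_1^{+,(1)})^p$.

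The last step is to match this with the definition of $r_0$, via the identity $E_1^{+,(1)} = \{\epsilon\in E_1:\epsilon\equiv 1\bmod(\zeta_1-1)^2\}$. In one direction: any real unit with $u\equiv 1+c(\zeta_1-1)\bmod(\zeta_1-1)^2$ satisfies $\bar u\equiv 1-c(\zeta_1-1)\bmod(\zeta_1-1)^2$ (using $\zeta_1^{-1}\equiv 1-(\zeta_1-1)\bmod(\zeta_1-1)^2$), so $u=\bar u$ forces $2c=0$, hence $c=0$. In the other direction: given $u=\zeta_1^a u^+\equiv 1\bmod(\zeta_1-1)^2$, reduction modulo $(\zeta_1-1)$ gives $u^+\equiv 1$, then the first direction yields $u^+\equiv 1\bmod(\zeta_1-1)^2$, and finally $\zeta_1^a\equiv 1+a(\zeta_1-1)\bmod(\zeta_1-1)^2$ forces $a=0$. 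Granted this, $(E_1^{+,(1)})^p = \{\epsilon\equiv 1\bmod(\zeta_1-1)^2\}^p$ and the dimension above equals $r_0$, yielding $\dim V=(p-3)/2+r_0$. The main technical step is the identification $E_1^{+,(1)}=\{\epsilon\equiv 1\bmod(\zeta_1-1)^2\}$, which essentially uses $p\neq 2$; the rest is routine $\pm$-eigenspace bookkeeping.
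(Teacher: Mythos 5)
Your dimension count is correct, and it supplies in detail what the paper itself never proves: the paper simply quotes this theorem from \cite{S1}, after stating the ingredients it rests on (Theorem 1.1, Theorem 1.2 and Definition 1.3). All the key steps of your computation check out: the truncated logarithm makes $U^{(1)}=1+\fm$ an $\FF_p$-space of dimension $p-2$; the cyclotomic units $1+\zeta_1+\cdots+\zeta_1^{a-1}$ kill the $\FF_p^*$ factor; the eigenspace dimensions $\dim U^{(1)}_+=(p-3)/2$, $\dim U^{(1)}_-=(p-1)/2$ are right (for the latter it is worth saying explicitly that conjugation permutes the basis $\zeta_1,\dots,\zeta_1^{p-1}$ of $\Z[\zeta_1]/(p)$ without fixed points, so the minus part of the whole ring has dimension $(p-1)/2$ and lies inside $\fm$); the identity $E_1^{+,(1)}=\{\epsilon\equiv 1\bmod(\zeta_1-1)^2\}$ and the fact that any unit $\equiv 1\bmod p$ is real are proved correctly; and the kernel of $E_1^{+,(1)}/(E_1^{+,(1)})^p\to U^{(1)}_+$ is exactly the group whose order defines $r_0$ in Definition 1.3. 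This is, in spirit, the same Kummer's-Lemma-plus-$\pm$-eigenspace bookkeeping that \cite{S1} and \cite{KM} perform, so the route is not genuinely different from the source the paper relies on.

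There is one genuine omission. By definition, $V=U(\Z[\zeta_1]/(p))/(E_1\times E_2)$: the images of the units of $\Z[\zeta_2]$ must also be factored out, while your first paragraph silently replaces $V$ by $U^{(1)}/\pi(E_1^{(1)})$ and never mentions $E_2$. As written, your argument only exhibits a group of order $p^{\frac{p-3}{2}+r_0}$ that surjects onto $V$, i.e.\ it proves the inequality $\dim V\leq\frac{p-3}{2}+r_0$, not the stated isomorphism. The missing step is precisely the paper's Theorem 1.1, $V=U(\Z[\zeta_1]/(p))/E_1$, which follows from the norm compatibility of Theorem 1.2: for $\epsilon\in E_2$ one has $j_2(\epsilon)=j_1(N(\epsilon))$ with $N(\epsilon)\in E_1$, so the image of $E_2$ is already contained in that of $E_1$ and can be dropped. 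Either cite Theorem 1.1 at the outset or include this one-line norm argument; with that addition your proof is complete.
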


Another important observation was made in \cite{S1}: there exists a splitting map
$Cl (\mathbb{Z} [\zeta_2])\to Pic (A) .$ Therefore,
$Pic ( A)= Pic (\mathbb{Z} [\zeta_2])\oplus B$ for a certain group $B$, which will be described explicitly in the next subsection. The Mayer--Vietoris exact sequence reads now as
$$
0\to V\to B\to  Pic (\mathbb{Z} [\zeta_1])\to 0.
$$
We continue to review necessary results of \cite{S1}.
\subsection{Invertible modules over $A$ }
By Milnor's theory of projective modules  over Cartesian products (see \cite{M}), any invertible module over the ring $A$
can be presented as $M(\alpha_1,\alpha_2, h) ,$ where $\alpha_k\in\Z[\zeta_k],\ k=1,2$ and $h\in U(F).$ The  facts below
were proved in \cite{S1}:
\begin{enumerate}
\item $M(\alpha_1,\alpha_2, h)\cong M(q_1 \alpha_1,q_2 \alpha_2, j_1 (q_1^{-1})hj_2 (q_2) ,$ where $q_k \in \Z[\zeta_k]$
are such that $(q_k ,p)=1 $;
\item $M(\alpha_1,\alpha_2, h)$ is free if and only if $\alpha_k=(r_k),\ r_k\in\Z[\zeta_k]$ and
$j_1 (r_1^{-1})hj_2 (r_2)=j_1 (\epsilon),\ \epsilon\in U(\Z[\zeta_1])$;
\item $M(\alpha_1,\alpha_2, h)\otimes M(\beta_1,\beta_2, g)\cong M(\alpha_1\beta_1,\alpha_2\beta_2, hg) ;$
\item the map $\psi: Cl(\Z[\zeta_2])\to Pic(A)$ defined as $\phi(\alpha)=M(N(\alpha),\alpha, 1)$ ($N$ is the norm map
extended to ideals of $\Z[\zeta_2]$) splits the canonical projection $Pic(A)\to Cl(\Z[\zeta_2]);$
\item $Pic(A)\cong Cl(\Z[\zeta_2])\oplus B$ for some group $B$ and $V\subset B.$
\item $B$ is generated by invertible modules of the form $M(\alpha,\Z[\zeta_2], h)$ .
\end{enumerate}
Note that the Sylow p-component of $Pic (\mathbb{Z} [\zeta_1])=Cl(\Z[\zeta_1])$ is exactly $S_1 .$
We will denote the Sylow $p$-part of $B$ by $B_p .$ Then, the essential part of the exact Mayer--Vietoris reads as follows:
$$0\to V\to B_p \to S_1 \to 0.$$
\subsection{Numbers $R$, $r$, and fine structure of $B_p$}
Let us denote the number of $\Z_p$-generators of the group $S_1$ by $R.$
Let us denote by $L\subset S_1$ the subgroup generated by the ideal classes $\alpha\in S_1$ such that
$M(\alpha,\Z[\zeta_2], h)$ has exponent $p$ in $Pic(A).$ Of course, $L$ is a subgroup of $S_1$
and $\alpha$ has exponent $1$ (this means that $\alpha^p=1$) in $S_1 .$
\begin{lemma}
$L$ consists of elements $\alpha\in S_1$ such that $\alpha^p=(q)$ and $q\equiv \epsilon\in U(\Z[\zeta_1])\ mod (p).$
\end{lemma}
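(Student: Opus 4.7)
The plan is to translate membership in $L$ into a concrete condition on a generator of $\alpha^p$, using the Milnor presentation of invertible $A$-modules together with Theorem 1.2 and the explicit structure of $U(F)$, where $F=\Z[\zeta_1]/(p)$.

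First, by property (3) of Section 1.2, $M(\alpha,\Z[\zeta_2],h)^{\otimes p}\cong M(\alpha^p,\Z[\zeta_2],h^p)$. For this tensor power to be free, $\alpha^p$ must be principal, say $\alpha^p=(q)$; this already gives the necessity of the first half of the claim and recovers the observation that $\alpha$ has order dividing $p$ in $S_1$.

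Next, I apply property (2) to $M((q),\Z[\zeta_2],h^p)$: it is free iff there exist $u\in E_2$ (so $\Z[\zeta_2]=(u)$) and $\epsilon\in E_1$ with $j_1(q^{-1})\,h^p\,j_2(u)=j_1(\epsilon)$. Using Theorem 1.2 to rewrite $j_2(u)=j_1(N(u))$ with $N(u)\in E_1$, and absorbing $N(u)$ into the unit $\epsilon$, this simplifies to the single requirement $h^p\in j_1(q)\cdot j_1(E_1)$. Hence $\alpha\in L$ iff some $h\in U(F)$ satisfies this, i.e.\ iff $j_1(q)\in U(F)^p\cdot j_1(E_1)$.

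Finally, I compute $U(F)^p$ from the stated decomposition $U(F)\cong \FF_p^{*}\oplus \FF_p^{p-2}$: the first factor has order prime to $p$, so it equals its $p$-th power, while the second factor has exponent $p$ and contributes nothing. Therefore $U(F)^p=\FF_p^{*}$. Since $1,2,\dots,p-1$ are units in $\Z\subset\Z[\zeta_1]$, we have $\FF_p^{*}\subset j_1(E_1)$, so $U(F)^p\cdot j_1(E_1)=j_1(E_1)$. The membership condition collapses to $j_1(q)\in j_1(E_1)$, equivalently $q\equiv\epsilon\pmod p$ for some unit $\epsilon\in U(\Z[\zeta_1])$, which is exactly the characterization in the lemma. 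The one nontrivial input is the identification $U(F)^p=\FF_p^{*}\subset j_1(E_1)$; without it the condition on $h$ would look opaque, and this is the step where the structure of the residue ring is actually doing work.
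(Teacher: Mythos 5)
Your reduction is the route the paper intends (its own proof is just the remark that the lemma ``follows from the properties of $M(\alpha_1,\alpha_2,h)$''): tensor up by property (3), apply the freeness criterion (2), use Theorem 1.2 to replace $j_2(u)$ by $j_1(N(u))$ with $N(u)\in E_1$, and arrive at the criterion $j_1(q)\in U(F)^p\cdot j_1(E_1)$, together with the correct computation $U(F)^p\cong\FF_p^*$ (the constant/Teichm\"uller part of $U(F)$). However, the step you yourself single out as the one doing real work is justified incorrectly: the rational integers $2,\dots,p-1$ are \emph{not} units of $\Z[\zeta_1]$ (the norm of $a$ from $\Q(\zeta_1)$ to $\Q$ is $a^{p-1}\neq\pm 1$), and of course not units of $\Z$. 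So the inclusion $\FF_p^*\subset j_1(E_1)$ does not follow from what you wrote, and without it the condition $j_1(q)\in U(F)^p\cdot j_1(E_1)$ does not collapse to $j_1(q)\in j_1(E_1)$, which is exactly the content of the lemma.

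The inclusion is nevertheless true, and the repair is short but genuinely uses units of the cyclotomic field rather than of $\Z$: for $1\le a\le p-1$ the cyclotomic unit $\eta_a=(1-\zeta_1^{a})/(1-\zeta_1)=1+\zeta_1+\cdots+\zeta_1^{a-1}$ lies in $E_1$ and satisfies $\eta_a\equiv a \ mod\ (\zeta_1-1)$. Writing $j_1(\eta_a)=a(1+x)$ with $x$ in the maximal ideal $(\zeta_1-1)F$, and using that $F$ has characteristic $p$ with $(\zeta_1-1)^{p-1}=0$ in $F$ (so $(1+x)^p=1+x^p=1$) and that $a^p=a$ in $F$, one gets $j_1(\eta_a^{\,p})=a$. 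Hence every constant of $\FF_p^*$ is the image of a global unit, i.e.\ $U(F)^p\subset j_1(E_1)$, and with this substitution your argument goes through and coincides with the paper's intended one.
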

\begin{remark}
Changing $q$, we can set $\epsilon=1.$
\end{remark}
\begin{proof}
A proof easily follows from the properties of $M(\alpha_1,\alpha_2,h)$ above.
\end{proof}
The following result was proved in \cite{S1}:
\begin{lemma}\label{L}
$L\cong \ch((S_1/S_1^p)/\ch(S_1/S_1^p)_\epsilon).$
\end{lemma}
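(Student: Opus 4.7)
The plan is to realize $L$ as the kernel of a natural homomorphism $\delta: S_1[p] \to V$ and then pass to Pontryagin duals. For $\alpha \in S_1[p]$ one chooses a generator $q$ of the principal ideal $\alpha^p$ (coprime to $p$, which is possible since $(\zeta_1 - 1)$ is principal in $\Z[\zeta_1]$) and sets $\delta(\alpha) := [j_1(q)] \in V = U(\Z[\zeta_1]/(p))/E_1$. The class is independent of the choice of $q$, since two generators differ by an element of $E_1$, and $\delta$ is multiplicative. Property (2) of subsection 1.2, applied to $M(\alpha,\Z[\zeta_2],h)^p = M(\alpha^p, \Z[\zeta_2], h^p)$, shows that this module admits a lift with trivial $p$-th power in $\Pic(A)$ precisely when $j_1(q) \in j_1(E_1)$ (the $h^p$ factor is automatically a $p$-th power, hence trivial in the elementary abelian group $V$), so Lemma 1.8 gives $L = \ker \delta$.

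Both $S_1[p]$ and $V$ are finite elementary abelian $p$-groups, so Pontryagin duality provides a canonical isomorphism $\ch(S_1[p]) \cong S_1/S_1^p$. Dualizing the short exact sequence $0 \to L \to S_1[p] \to \delta(S_1[p]) \to 0$ produces
$$
L \cong \ch((S_1/S_1^p)/W),
$$
where $W \subset S_1/S_1^p$ is the image of the injection $\delta^*: \ch(V) \hookrightarrow \ch(S_1[p]) \cong S_1/S_1^p$. The lemma is thereby reduced to the identification $W = \ch(S_1/S_1^p)_\epsilon$ as subgroups of $S_1/S_1^p$ (via the above duality).

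The main obstacle is this last identification. A character $\chi$ of $V$ is a character of $U(\Z[\zeta_1]/(p))$ that vanishes on $j_1(E_1)$, and $\delta^*(\chi)(\alpha) = \chi(j_1(q))$. By construction the subgroup $\ch(S_1/S_1^p)_\epsilon$ is generated by the characters of $S_1/S_1^p$ obtained from units $\epsilon \equiv 1 \pmod p$ via the same Kummer-type pairing. Matching these two families reduces to the statement that the conditions ``$j_1(q) \in j_1(E_1)$'' and ``$\epsilon \equiv 1 \pmod p$'' are Pontryagin-dual in the pairing between $S_1[p]$ and $E_1$. This is a concrete computation using the explicit decomposition $U(\Z[\zeta_1]/(p)) \cong \FF_p^* \oplus \FF_p^{p-2}$ and the image of $E_1$ in it; it is essentially the calculation that underlies the very definition of $r_0$ recalled earlier.
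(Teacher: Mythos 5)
The formal skeleton of your argument is sound and in fact coincides with what the paper itself records as Theorem 1.10 and Lemma 1.8: the map $\delta\colon S_p\to V$, $\alpha\mapsto j_1(q)$ with $\alpha^p=(q)$, is well defined, and $L=\ker\delta$ follows from property (2) of subsection 1.2. (Note that the paper gives no proof of the lemma you were asked about --- it is quoted from \cite{S1} --- so your attempt has to stand on its own.) Dualizing the sequence $0\to L\to S_p\to\delta(S_p)\to 0$ then gives $L\cong \ch\bigl(\ch(S_p)/\,\mathrm{im}\,\delta^*\bigr)$, and everything up to this point is correct bookkeeping, apart from two slips: $\delta^*\colon \ch(V)\to\ch(S_p)$ is injective only if $\delta$ is surjective, which it is not (its image is $\FF_p^{r_0}\subsetneq V$), and $\ch(S_1/S_1^p)_\epsilon$ is a subgroup of $\ch(S_1/S_1^p)$, not of $S_1/S_1^p$, so your phrase ``as subgroups of $S_1/S_1^p$ via the above duality'' invokes a second, unexplained (and non-canonical) self-duality identification.

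The genuine gap is the last step, which is the entire arithmetic content of the lemma: the identification of $\mathrm{im}\,\delta^*$ with the group of characters cut out by units $\epsilon\equiv 1 \bmod (p)$. You assert it as ``a concrete computation'' that is ``essentially the calculation underlying the definition of $r_0$'', but that definition only records the order $p^{r_0}$ of the unit-character group; it says nothing about which characters of $S_p$ arise as pullbacks $\chi\circ\delta$. To prove the identification you would have to (i) define the pairing precisely --- the unit $\epsilon$, being $\equiv 1\bmod (p)$ and hence $\equiv 1 \bmod (\zeta_1-1)^p$, gives an unramified Kummer extension $\KK(\epsilon^{1/p})/\KK$ and thus a character of $S_1/S_1^p$ via the Artin symbol, i.e.\ the $p$-th power residue symbol of $\epsilon$ at $\alpha$; (ii) use the product formula/reciprocity to convert this global symbol on $\alpha\in S_p$ into the local norm-residue symbol of the pair $(\epsilon, q)$ at $\omega_0=(\zeta_1-1)$, which is what actually links it to $\chi(j_1(q))$, i.e.\ to $\delta^*$; and (iii) prove the resulting pairing between $\{\epsilon\equiv 1\bmod (p)\}/\{\epsilon\equiv 1 \bmod (\zeta_1-1)^2\}^p$ and $S_p/L$ is non-degenerate. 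None of (i)--(iii) is in your text, and they are exactly what \cite{S1} (following Kervaire--Murthy) establishes. You also cannot close the gap by counting: the equality $\#\,\mathrm{im}\,\delta=p^{r_0}$ is Corollary 1.11 of the paper, which is deduced \emph{from} this lemma (via $R-r=r_0$), so appealing to it here would be circular. As it stands, your proposal reformulates the lemma in dual language rather than proving it.
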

Let us denote the number of of $\Z_p$-generators of $L$ by $r.$
\begin{corollary}
$R-r=r_0.$

\end{corollary}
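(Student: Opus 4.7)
The plan is a short dimension count that uses only Lemma~\ref{L} together with the definition of $r_0$. Since $S_1$ is a finite abelian $p$-group, its minimal number of $\Z_p$-generators equals $\dim_{\FF_p}(S_1/S_1^p)$, and by Pontryagin duality this agrees with $\dim_{\FF_p}\ch(S_1/S_1^p)$; thus $R=\dim_{\FF_p}\ch(S_1/S_1^p)$. From the description of $L$ preceding Lemma~\ref{L}, every $\alpha\in L$ satisfies $\alpha^p=1$ in $S_1$, so $L$ is an $\FF_p$-vector space and $r=\dim_{\FF_p}L$.

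Next I would read the isomorphism of Lemma~\ref{L} as
$$L\cong \ch(S_1/S_1^p)\big/\ch(S_1/S_1^p)_\epsilon,$$
which is the only parsing consistent with the definition, since $\ch(S_1/S_1^p)_\epsilon$ is constructed as a subgroup of $\ch(S_1/S_1^p)$. Taking $\FF_p$-dimensions on both sides yields
$$r = R - \dim_{\FF_p}\ch(S_1/S_1^p)_\epsilon.$$

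Finally, the definition of $r_0$ gives $r_0=\log_p\#\ch(S_1/S_1^p)_\epsilon$. Since this group sits inside $\ch(S_1/S_1^p)$, it is annihilated by $p$, so $\log_p$ of its order coincides with its $\FF_p$-dimension. Substituting into the previous display gives $R-r=r_0$, which is the corollary. No serious obstacle arises once the parsing of Lemma~\ref{L} is accepted; the only small point worth verifying is that the minimal number of generators of $L$ as an abelian group equals $\dim_{\FF_p}L$, which is immediate because $L$ is killed by $p$.
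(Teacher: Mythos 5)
Your proof is correct and is essentially the argument the paper intends: the corollary is stated as an immediate consequence of Lemma~\ref{L}, and your dimension count (using that $S_1/S_1^p$, $L$, and $\ch(S_1/S_1^p)_\epsilon$ are all elementary abelian $p$-groups, so minimal generator numbers are $\FF_p$-dimensions) is exactly the omitted verification, including the correct reading of the quotient in Lemma~\ref{L}.
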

\begin{corollary}\label{cor}
$B_p\cong\FF^{(p-3)/2} \oplus B_1 ,$ where $B_1$ can be described in the following way:
it is a $p$-group, its number $\Z_p$-generators is $R$. They are in a one-to-one correspondence with
generators $\alpha_i,\ 1\leq i\leq R$ of $S_1$. We denote them by $\beta_1,\cdots,\beta_R$.
For $1\leq i\leq r$ we have $exp (\beta_i)=exp (\alpha_i)$ and for $r+1\leq i\leq R$ $exp (\beta_i)=1+exp (\alpha_i)$,
where $exp ( x)=n$ means that $x^{p^n}=1$ but $x^{p^{n-1}}\neq 1.$
\end{corollary}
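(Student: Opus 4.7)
The plan is to analyse the short exact sequence $0 \to V \to B_p \to S_1 \to 0$ by explicitly lifting $\Z_p$-generators of $S_1$ to $B_p$ via the presentation $M(\alpha,\Z[\zeta_2],h)$, and then tracking which dimensions of $V \cong \FF_p^{(p-3)/2+r_0}$ get consumed by these lifts. Since $r_0 = R - r$, I expect exactly $r_0$ of these dimensions to be absorbed, leaving $\FF^{(p-3)/2}$ as a free direct summand of $B_p$.

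I would first pick $\Z_p$-generators $\alpha_1,\dots,\alpha_R$ of $S_1$, with exponents $m_i := \exp(\alpha_i)$, arranged so that the $p$-torsion classes $\alpha_i^{p^{m_i-1}}$ form an $\FF_p$-basis of $S_1[p]$ and $\alpha_i^{p^{m_i-1}} \in L$ precisely when $1 \leq i \leq r$; this is possible because $\dim_{\FF_p} L = r$ by Lemma~\ref{L}. Fix any lift $\beta_i := [M(\alpha_i,\Z[\zeta_2],h_i)] \in B_p$ of $\alpha_i$ and write $\alpha_i^{p^{m_i}} = (q_i)$. Items~2 and~3 of Subsection~1.2 give
$$\beta_i^{p^{m_i}} \;=\; \bigl[M\bigl((q_i),\Z[\zeta_2],h_i^{p^{m_i}}\bigr)\bigr],$$
which is trivial in $B_p$ iff $h_i^{p^{m_i}} j_1(q_i)^{-1} \in j_1(E_1)$, i.e.\ iff $[j_1(q_i)] = 0$ in $V = U(\Z[\zeta_1]/(p))/E_1$ (the $[h_i^{p^{m_i}}]$-term vanishes automatically since $V$ has exponent $p$ and $m_i \geq 1$). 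By the preceding lemma characterising $L$, the relation $\alpha_i^{p^{m_i-1}} \in L$ is equivalent to $q_i \equiv \epsilon \pmod{p}$ for some unit $\epsilon$, hence to $[j_1(q_i)] = 0$ in $V$. Therefore $\exp(\beta_i) = m_i$ for $i \leq r$, while for $i > r$ the element $\beta_i^{p^{m_i}} \neq 0$ sits in the exponent-$p$ group $V$, forcing $\exp(\beta_i) = m_i + 1$.

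To assemble the direct-sum decomposition, note that $\{[j_1(q_i)]\}_{i > r}$ are the images of a basis of $S_1[p]/L$ under the injection $S_1[p]/L \hookrightarrow V$ supplied by Lemma~\ref{L}, so they are $\FF_p$-linearly independent and span an $r_0$-dimensional subspace $W \subset V$. Choosing any $\FF_p$-complement $W' \subset V$ of $W$ (of dimension $(p-3)/2$) and letting $B_1 \subset B_p$ be the subgroup generated by $\beta_1,\dots,\beta_R$, the equalities $B_1 \cap V = W$ and $B_1 \twoheadrightarrow S_1$ yield $B_p = B_1 \oplus W' \cong B_1 \oplus \FF^{(p-3)/2}$. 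A direct order count, combined with the linear independence above, then identifies $B_1 \cong \prod_i \Z/p^{\exp(\beta_i)}$ with the asserted exponents. I expect the main subtlety to be selecting the generators $\alpha_i$ compatibly with the filtration $L \subset S_1[p]$ and ruling out any extraneous relations in $B_1$; both reduce to the linear-independence assertion controlled by Lemma~\ref{L}.
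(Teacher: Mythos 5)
The paper itself states Corollary \ref{cor} without any proof (it appears as part of the review of results imported from \cite{S1}), so there is no in-paper argument to compare against; judged on its own, your plan is correct and is surely the intended derivation from the facts listed in Subsection 1.2. Your key computation is right: for a lift $\beta_i=[M(\alpha_i,\Z[\zeta_2],h_i)]$ one has $\beta_i^{p^{m_i}}=[M((q_i),\Z[\zeta_2],h_i^{p^{m_i}})]$, whose class in $V$ is $[j_1(q_i)]^{-1}$ because $V$ has exponent $p$, so $\exp(\beta_i)\in\{m_i,m_i+1\}$ with the smaller value exactly when $[j_1(q_i)]=0$, i.e.\ (by the lemma characterizing $L$, Lemma 1.5) exactly when $\alpha_i^{p^{m_i-1}}\in L$; and the order count $|B_p|=|V|\,|S_1|$ then forces $B_1=\bigoplus_i\langle\beta_i\rangle$ once $W:=\langle\beta_i^{p^{m_i}}:i>r\rangle$ has dimension $r_0$. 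Three points deserve more than the justification you give. First, the existence of generators of $S_1$ with $\alpha_i^{p^{m_i-1}}\in L$ precisely for $i\le r$ does not follow merely from $\dim_{\FF_p}L=r$; it is the (true, standard) fact that a subgroup of the socle of a finite abelian $p$-group admits a compatible cyclic decomposition, proved by splitting off a pure cyclic summand through an element of $L$ of maximal height and inducting. This step is genuinely needed: for an unadapted generating set the exponent pattern in the corollary is simply false (replace $\alpha_1$ by $\alpha_1\alpha_2$ in an example with $L=\langle\alpha_1\rangle$), so the adapted choice is part of what the corollary means. Second, the injection $S_1[p]/L\hookrightarrow V$ that gives $\dim W=r_0$ is not supplied by Lemma \ref{L} (the character-group isomorphism) but by Lemma 1.5 together with the well-definedness of $\alpha\mapsto[j_1(q)]$, i.e.\ the content of Theorem 1.10; there is no circularity in using it, since its proof does not rely on Corollary \ref{cor}, but the citation should be corrected (or you can note that your own computation of $\beta_i^{p^{m_i}}$ already yields kernel $=L$). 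Third, the asserted equality $B_1\cap V=W$ needs the short check that any product $\prod_i\beta_i^{c_i}$ lying in $V$ must have $p^{m_i}\mid c_i$ for all $i$ (because the $\alpha_i$ generate $S_1$ as a direct sum), after which it reduces to powers of the $\beta_i^{p^{m_i}}$, the ones with $i\le r$ being trivial. With these routine insertions your argument is complete.
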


The choice of $\Z_p$-generators of the group $B_1$ is not canonical. However,  generators of $\FF^{(p-3)/2} $
can be chosen in a canonical way.

\begin{theorem}
Let $S_p$ be a subgroup of $S_1$ consisting of elements of exponent $1.$
Then the map $S_p\to V$ determined by $\alpha\to q$, where $\alpha^p =(q)$ is well-defined and its kernel is $L.$
\end{theorem}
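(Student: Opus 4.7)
The plan has three parts: construct a candidate map from ideal representatives, verify that it descends to a homomorphism $S_p\to V$, and identify the kernel via the lemma characterising $L$.

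For each $\alpha\in S_p$ I would pick an integral ideal representative $\mathfrak{a}$ coprime to $(p)$. Since $\alpha^p$ is trivial in $S_1$, we have $\mathfrak{a}^p=(q)$ for some $q\in\Z[\zeta_1]$, and $q$ is coprime to $p$ because $\mathfrak{a}$ is. The candidate map is then $\alpha\mapsto[j_1(q)]\in V=U(\Z[\zeta_1]/(p))/E_1$.

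The main work is well-definedness, and this is the step I expect to be the chief obstacle. Replacing the generator $q$ by $\epsilon q$ with $\epsilon\in U(\Z[\zeta_1])$ only multiplies the class by $j_1(\epsilon)\in E_1$, which is trivial in $V$. Replacing the representative by $\mathfrak{a}'=(s)\mathfrak{a}$ with $s\in\Q(\zeta_1)^*$ forces $v_{\mathfrak{p}}(s)=0$ at the unique prime $\mathfrak{p}=(1-\zeta_1)$ above $p$, because both $\mathfrak{a},\mathfrak{a}'$ are coprime to $p$; hence $s$ is a $p$-unit, and $j_1$ extends to $s$ through the localisation at $\mathfrak{p}$, which maps onto the local ring $\Z[\zeta_1]/(p)$. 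Up to a unit, $q$ is then replaced by $s^pq$, so $[j_1(q')]=[j_1(s)]^p[j_1(q)]$ in $V$. Because the earlier theorem asserts $V\cong\FF_p^{(p-3)/2+r_0}$, the group $V$ is of exponent $p$, so the $p$-th-power correction vanishes and the map is well-defined. Multiplicativity then follows at once from $(\alpha_1\alpha_2)^p=(q_1q_2)$.

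For the kernel, $\alpha$ lies in the kernel iff $j_1(q)\in E_1$, i.e., iff there is a unit $\epsilon\in U(\Z[\zeta_1])$ with $q\equiv\epsilon\pmod{(p)}$. This is precisely the description of $L$ given in the first lemma of this subsection, so $\ker=L$.
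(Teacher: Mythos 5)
Your proposal is correct and follows essentially the same route as the paper: the paper's (much terser) proof likewise notes that changing the representative ideal replaces $q$ by $r^p q$ and changing the generator replaces $q$ by $\epsilon q$, neither of which affects the class in $V$ (your explicit appeal to $V\cong\FF_p^{\frac{p-3}{2}+r_0}$ having exponent $p$ is the point the paper leaves implicit), while the kernel is identified with $L$ via its characterisation $\alpha^p=(q)$, $q\equiv\epsilon \bmod (p)$. Your added care about choosing representatives coprime to $p$ and extending $j_1$ to $p$-units is a harmless refinement, not a different method.
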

\begin{proof}
If we substitute $\alpha$ by $r\alpha,\ r\in \Q(\zeta_1),$ then $q\to r^p q$. Another possible choice of $q$ is
$q\epsilon$. These changes will not affect the image of $\alpha$ in $V.$ The statement about the kernel follows
from the properties of $M(\alpha_1,\alpha_2,h)$ above.
\end{proof}
\begin{corollary}
The image of $S_p$\ in $ V$ is isomorphic to $\FF_p^{r_0}$.
\end{corollary}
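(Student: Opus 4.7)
The plan is to combine the theorem just proved with the structural facts about $S_1$ and $L$ already recorded. By that theorem the map $\alpha\mapsto q$ (where $\alpha^p=(q)$) is well-defined on $S_p$ with kernel $L$, so it induces an injection $S_p/L\hookrightarrow V$, and identifying the image with $\FF_p^{r_0}$ reduces to computing the $\FF_p$-dimension of $S_p/L$.

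The first step I would carry out is to describe $S_p$. Since $S_1$ is a finite abelian $p$-group with $R$ minimal $\Z_p$-generators, one may write $S_1\cong\bigoplus_{i=1}^R \Z/p^{n_i}\Z$; the subgroup of elements of exponent $1$ is then exactly the kernel of multiplication by $p$, namely $\bigoplus_{i=1}^R \Z/p\Z\cong\FF_p^R$. In particular $S_p$ is an $\FF_p$-vector space of dimension $R$.

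Next I would examine $L$. Every generator of $L$, and hence every element of $L$, has exponent $1$ in $S_1$, so $L\subseteq S_p$ and $L$ is itself an $\FF_p$-vector space. Being of exponent $p$, its $\Z_p$-generators agree with its $\FF_p$-generators, so $\dim_{\FF_p} L = r$; this is also consistent with the description in Lemma \ref{L}, where $L$ appears as the character group of a quotient of the $\FF_p$-space $S_1/S_1^p$.

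Combining these two points, $S_p/L$ is an $\FF_p$-vector space of dimension $R-r$, and the corollary preceding the theorem gives $R-r=r_0$. Hence the image of $S_p$ in $V$, being isomorphic to $S_p/L$, is isomorphic to $\FF_p^{r_0}$. There is no genuine obstacle in this argument; it is dimension bookkeeping once the theorem above and the identity $R-r=r_0$ are in hand.
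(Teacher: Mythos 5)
Your proposal is correct and matches the paper's intended argument: the corollary is meant to follow immediately from the preceding theorem (image $\cong S_p/L$) together with the dimension count $\dim_{\FF_p}S_p=R$, $\dim_{\FF_p}L=r$, and the identity $R-r=r_0$ from the earlier corollary. Nothing in your bookkeeping deviates from what the paper leaves implicit.
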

We remind the reader that $V\cong\FF_p^{\frac{p-3}{2} +r_0}$ and the group $Gal(\Q(\zeta_1)/\Q):=G$ acts on $V.$
The image of $S_p$ is $G$-invariant and therefore, the summand $\FF_p^{\frac{p-3}{2}}\subset V\subset B_p$ can be
chosen canonically as a $G$-invariant complement to the image of $S_p$ in $V.$
\begin{remark}
The complex conjugation $\sigma\in G$ acts on $V.$ Consequently, $V=V_+ \oplus V_- $ with obvious $\pm$-parts $V_\pm .$
It easily follows from Kummer's Lemma ($\epsilon=\zeta_1^k \epsilon_{real}$) that $V_-\cong\FF^{\frac{p-3}{2}}$ while
$V_+\cong \FF^{r_0}$. However, it is not difficult to see that $V_+\neq Im (S_p) .$
\end{remark}
\subsection{Action of complex conjugation and further relations between $R$- and $r$-numbers}
Let $r_\pm$ be the number of $\Z_p$-generators of $L_\pm$ and let $R_\pm$ be the number of $\Z_p$-generators
of $S_1 .$ Here, $L=L_+\oplus L_-$ and $S_1=(S_1)_+\oplus (S_1 )_-$ are defined by action of the complex conjugation $\sigma .$
\begin{lemma}
$\ch (S_1/S_1^p)_\epsilon \subset (\ch (S_1/S_1^p))_+ =\ch((S_1/S_1^p)_-)$.
\end{lemma}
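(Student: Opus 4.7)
The plan is to split the statement into the equality $(\ch(S_1/S_1^p))_+ = \ch((S_1/S_1^p)_-)$ and the inclusion $\ch(S_1/S_1^p)_\epsilon \subset (\ch(S_1/S_1^p))_+$, and treat them separately. For the equality, identify $\ch(S_1/S_1^p) = \Hom(S_1/S_1^p, \mu_p)$ with the diagonal $G$-action $(g\chi)(x) = g(\chi(g^{-1}x))$. Since $\sigma(\zeta_1) = \zeta_1^{-1}$, complex conjugation acts on $\mu_p$ by inversion. For any $\sigma$-invariant $\chi$ and $x \in (S_1/S_1^p)_+$, one has $\chi(x) = \sigma(\chi(x)) = \chi(x)^{-1}$, so $\chi(x)^2 = 1$ and hence $\chi(x) = 1$ (as $p$ is odd). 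Conversely, any $\chi$ vanishing on $(S_1/S_1^p)_+$ is $\sigma$-invariant, so the $+$-part equals $\ch((S_1/S_1^p)_-)$ exactly.

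For the inclusion, recall that $\ch(S_1/S_1^p)_\epsilon$ is generated by the Kummer characters $\chi_\epsilon$ associated via class field theory to the cyclic degree-$p$ extensions $\Q(\zeta_1, \epsilon^{1/p})/\Q(\zeta_1)$ for units $\epsilon \equiv 1 \pmod p$. My main step is to show that every such unit is in fact real. Write $\epsilon = \zeta_1^k \epsilon_r$ with $\epsilon_r$ real, by Kummer's lemma, and set $\pi = \zeta_1 - 1$. One computes $\sigma(\pi) = -\pi \zeta_1^{-1} \equiv -\pi \pmod{\pi^2}$; matching coefficients then shows that any real element of $\Z_p[\zeta_1]$ has its $\pi$-coefficient divisible by $p$, and since $v_\pi(p\pi) = p \geq 2$, this gives $\epsilon_r \equiv a_0 \pmod{\pi^2}$ for some $a_0 \in \Z_p^\times$. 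Thus $\epsilon \equiv a_0(1 + k\pi) \pmod{\pi^2}$. Because $p = u\pi^{p-1}$ with $p - 1 \geq 2$, the hypothesis $\epsilon \equiv 1 \pmod p$ descends to $\epsilon \equiv 1 \pmod{\pi^2}$, which forces $a_0 \equiv 1 \pmod p$ and $p \mid k$; so $\zeta_1^k = 1$ and $\epsilon = \epsilon_r$ is real.

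Given $\sigma\epsilon = \epsilon$, I would finish using $G$-equivariance of the Kummer pairing: $\chi_{g\epsilon}(gx) = g(\chi_\epsilon(x))$ for all $g \in G$. Applying this with $g = \sigma$ and using $\sigma^2 = 1$ yields $(\sigma\chi_\epsilon)(x) = \sigma(\chi_\epsilon(\sigma x)) = \sigma(\sigma(\chi_\epsilon(x))) = \chi_\epsilon(x)$, so $\chi_\epsilon \in (\ch(S_1/S_1^p))_+$. The main obstacle is the local congruence analysis establishing reality of $\epsilon$; the rest is formal once the sign conventions for the Kummer pairing and the $\sigma$-action on $\mu_p$ are fixed.
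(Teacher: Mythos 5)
Your proof is correct and follows essentially the same route as the paper: the whole point, there as here, is that Kummer's Lemma $\epsilon=\zeta_1^k\epsilon_{real}$ together with $\epsilon\equiv 1\bmod(p)$ (hence $\bmod(\zeta_1-1)^2$) forces $p\mid k$, so the defining units are real. The duality identification $(\ch(S_1/S_1^p))_+=\ch((S_1/S_1^p)_-)$ and the Galois-equivariance of the Kummer character, which you spell out, are exactly the steps the paper leaves implicit.
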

\begin{proof}
The units defining $\ch (S_1/S_1^p)_\epsilon$ are real. It is a consequence of Kummer's Lemma mentioned above.
\end{proof}
\begin{corollary}
$r_- =R_+,\ R_-=r_+ +r_0 .$
\end{corollary}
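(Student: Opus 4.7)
The plan is to combine Lemma~\ref{L} with the lemma just proved, and read off both identities as $\FF_p$-dimension counts on the $\pm$-eigenspaces of complex conjugation $\sigma$.

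First I would apply Lemma~\ref{L}, expressing $L$ as $\ch(Q)$ where
$$
Q \;:=\; (S_1/S_1^p)/\ch(S_1/S_1^p)_\epsilon ,
$$
with $W := \ch(S_1/S_1^p)_\epsilon$ viewed as a subgroup of $S_1/S_1^p$ via the Kummer-type duality implicit in Lemma~\ref{L}. By the definition of $r_0$, $\dim_{\FF_p} W = r_0$, while $\dim_{\FF_p}(S_1/S_1^p) = R = R_+ + R_-$.

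Next I would trace the $\sigma$-action through the dualities. The just-proved lemma locates $W$, a priori a subgroup of $\ch(S_1/S_1^p)$, inside its $+$-eigenspace $\ch((S_1/S_1^p)_-)$. Because $\sigma$ acts on $\mu_p$ by inversion, the Kummer-type identification of $\ch$ with the original group reverses $\pm$, so the image of $W$ in $S_1/S_1^p$ sits inside $(S_1/S_1^p)_-$. Consequently $Q_+ = (S_1/S_1^p)_+$ has dimension $R_+$, and $Q_- = (S_1/S_1^p)_-/\text{im}(W)$ has dimension $R_- - r_0$. A second application of the same twisted $\ch$ swaps the eigenspaces once more, giving $L_+ \cong \ch(Q_-)$ of dimension $R_- - r_0$, and $L_- \cong \ch(Q_+)$ of dimension $R_+$.

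Reading off these dimensions yields $r_- = R_+$ and $r_+ = R_- - r_0$, i.e., $R_- = r_+ + r_0$, as claimed. The consistency check $r_+ + r_- = (R_- - r_0) + R_+ = R - r_0 = r$ matches the earlier corollary $R - r = r_0$. The main subtlety will be the careful bookkeeping of the twist: both the identification of $W$ inside $S_1/S_1^p$ and the outer $\ch$ in Lemma~\ref{L} interchange $\pm$-eigenspaces because $\sigma$ acts on $\mu_p$ by $-1$, and one must verify that the two swaps combine to place $W$ in $(S_1/S_1^p)_-$, which is what forces $r_- = R_+$ rather than $r_+ = R_+$.
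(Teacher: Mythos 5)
Your proof is correct and follows essentially the same route as the paper: both arguments combine Lemma~\ref{L} with the preceding lemma (the $\epsilon$-characters come from real units, so $\ch(S_1/S_1^p)_\epsilon\subset(\ch(S_1/S_1^p))_+=\ch((S_1/S_1^p)_-)$) and the eigenspace-swapping convention for $\ch$, and then read off $r_-=\dim_{\FF_p}\ch((S_1/S_1^p)_+)=R_+$ and $r_+=R_- -r_0$ by counting dimensions on the $\pm$-parts. The paper derives the second identity alternatively from $R=r+r_0$, $R=R_+ +R_-$, $r=r_+ +r_-$ (exactly your consistency check), and your extra step of transporting $W=\ch(S_1/S_1^p)_\epsilon$ into $(S_1/S_1^p)_-$ is only a literal reading of the quotient in Lemma~\ref{L}; it changes nothing of substance.
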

\begin{proof}
Using \ref{L} and the lemma above, we see that $L_- =(\ch(S_1/S_1^p))_- =\ch((S_1/S_1^p)_+)$ what implies the first equality.
The second can be proved analogously, alternatively it follows from  equalities $R=r+r_0,\ R=R_+ +R_-, \ r=r_+ +r_- .$
\end{proof}
\begin{corollary}\bf{First inequality.}

$R_- -r_- \leq r_0$
\end{corollary}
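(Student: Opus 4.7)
The plan is to derive the inequality directly from the two identities $r_- = R_+$ and $R_- = r_+ + r_0$ established in the preceding corollary, combined with the obvious containment $L \subset S_1$ respected by the action of complex conjugation.

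First I would subtract the two identities. From $R_- = r_+ + r_0$ and $r_- = R_+$ one gets
\begin{equation*}
R_- - r_- = (r_+ + r_0) - R_+ = r_0 - (R_+ - r_+).
\end{equation*}
So the claimed inequality $R_- - r_- \leq r_0$ is equivalent to $R_+ - r_+ \geq 0$, i.e.\ to $r_+ \leq R_+$.

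Next I would observe that this last inequality is a tautology: by definition, $L$ is a subgroup of $S_1$, and both decompose under the action of $\sigma$ as $L = L_+ \oplus L_-$ and $S_1 = (S_1)_+ \oplus (S_1)_-$. Hence $L_+$ is a subgroup of $(S_1)_+$, and the number of $\Z_p$-generators can only decrease when passing to a subgroup of a finitely generated abelian $p$-group, so $r_+ \leq R_+$. Substituting back yields $R_- - r_- \leq r_0$, as desired.

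In this line of reasoning there is essentially no obstacle; the entire content sits in the two identities of the previous corollary (which in turn repackage Lemma \ref{L} together with Kummer's Lemma). The only point worth double-checking is that the numbers of $\Z_p$-generators really behave monotonically under inclusions of $\sigma$-isotypic components, but this is immediate since $(S_1)_+$ is a finite abelian $p$-group and $L_+$ is a subgroup, so the minimal number of generators of $L_+$ is at most that of $(S_1)_+$.
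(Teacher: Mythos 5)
Your argument is correct and is essentially the paper's own: the paper's one-line proof $r_0 = R - r = (R_+ - r_+) + (R_- - r_-) \geq R_- - r_-$ rests on exactly the same tacit inequality $r_+ \leq R_+$ that you isolate, and your use of the identities $r_- = R_+$, $R_- = r_+ + r_0$ from the preceding corollary is just an algebraic rearrangement of that computation. Your explicit justification that $L_+ \subset (S_1)_+$ forces $r_+ \leq R_+$ (ranks of finite abelian $p$-groups are monotone under subgroups) is a welcome clarification of a step the paper leaves implicit.
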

\begin{proof}
$r_0 =R-r=(R_+ -r_+)+(R_- -r_-) \geq R_- -r_-.$
\end{proof}
Now, we can reformulate \ref{cor}:
\begin{corollary}
\begin{itemize}
\item We have two exact sequences
$$0\to V_-=\FF_p^{(p-3)/2}\to (B_p)_- \to (S_1 )_-\to 0\ {\rm and}$$
$$0\to V_+=\FF_p^{r_0}\to (B_p)_+ \to (S_1)_+ \to 0.$$
\item $(B_p)_- =\FF_p^{(\frac{p-3}{2}+r_{-}) -R_-}\oplus (B_1)_-$
\item The group $(B_1)_-$ can be described as follows: it has $R_-$
$\Z_p$-generators, which are in a one-to-one correspondence with generators $\alpha_1,\cdots, \alpha_{R_-}$ of $(S_1)_-$.
Furthermore, $r_-$ generators of $(B_1)_-$ have the same exponent as the corresponding generator
of $(S_1)_-$ and $R_- -r_-$ generators have exponent $exp(\alpha_i) +1 .$
\item $(B_p)_+$ has a similar description but we will not need it.
\end{itemize}
\end{corollary}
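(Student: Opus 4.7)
The plan is to decompose every object in the earlier exact sequences and structural decompositions under the action of complex conjugation $\sigma$, and then read off each bullet. Because $p$ is odd and $\sigma^{2}=1$, every $G$-module splits canonically as $M=M_{+}\oplus M_{-}$, and the functors $(\cdot)_{\pm}$ are exact, so they respect kernels, images, quotients, $p$-torsion, and the number of generators in each eigenspace.

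For the first bullet I would apply $(\cdot)_{\pm}$ to the Mayer--Vietoris sequence $0\to V\to B_{p}\to S_{1}\to 0$, whose maps are $G$-equivariant. Combined with the identification $V_{-}\cong \FF_p^{(p-3)/2}$ and $V_{+}\cong \FF_p^{r_{0}}$ from the remark (a consequence of Kummer's Lemma), this yields the two displayed exact sequences. For the second bullet I would apply the same decomposition to the canonical splitting $B_{p}=W\oplus B_{1}$ of Corollary \ref{cor}, in which $W\cong \FF_p^{(p-3)/2}$ is the $G$-invariant complement chosen to the image of $S_{p}$ inside $V$. Since $W$ and $B_{1}$ are both $G$-stable, $(B_{p})_{-}=W_{-}\oplus (B_{1})_{-}$. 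To compute $W_{-}$, I would intersect $V=W\oplus \mathrm{image}(S_{p})$ with $V_{-}$; combined with the isomorphism $\mathrm{image}(S_{p})\cong S_{p}/L$ (from the theorem stated above) and the facts that $(S_{p})_{-}=(S_{1})_{-}[p]$ has $R_{-}$ generators while $L_{-}$ has $r_{-}$ generators, one obtains $\dim_{\FF_p} W_{-}=(p-3)/2-(R_{-}-r_{-})$, yielding the required exponent $(p-3)/2+r_{-}-R_{-}$.

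For the third bullet I would restrict the generator description of $B_{1}$ in Corollary \ref{cor} to the $-$ part: the generators $\beta_{i}$ with $\alpha_{i}\in (S_{1})_{-}$ correspond to the $R_{-}$ generators of $(S_{1})_{-}$, and the distinction between the $r_{-}$ that lie over $L_{-}$ (preserving the exponent) and the remaining $R_{-}-r_{-}$ (raising the exponent by $1$) carries over directly. The fourth bullet is handled identically on the $+$ side, with $V_{+}\cong \FF_p^{r_{0}}$ in place of $V_{-}\cong \FF_p^{(p-3)/2}$. The only step that requires care is verifying that the $\sigma$-eigenspace decomposition is compatible with the chosen generating sets of $S_{1}$, $L$, and $B_{1}$; this is routine because $\sigma$ has order $2$ coprime to $p$, so one may pick generators of each $p$-group respecting the $\pm$ decomposition, and the constructions of $S_{p}$, $L$, and $B_{1}$ are all canonical enough to split along it.
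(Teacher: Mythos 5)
Your proposal is correct and is essentially the argument the paper leaves implicit: the corollary is stated there as a direct reformulation of Corollary \ref{cor}, obtained by splitting everything into $\sigma$-eigenspaces (legitimate since $p$ is odd), using the $G$-equivariance of $0\to V\to B_p\to S_1\to 0$ and of the map $S_p\to V$, and the computation $V_-\cong\FF_p^{(p-3)/2}$, $V_+\cong\FF_p^{r_0}$. Your derivation of the exponent $\frac{p-3}{2}+r_--R_-$ from $V_-=W_-\oplus(\mathrm{Im}\,S_p)_-$ with $\dim(\mathrm{Im}\,S_p)_-=R_--r_-$ is exactly what the paper's formula encodes, so no further comment is needed.
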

\section{$B_p$ as a Galois group and the second inequality}
Following ideas of \cite{KM}, let us define the ray ideal group $H$ as the group
of those principal ideals of $\Q(\zeta_1)$, which possess a generator $g$ such that
$g\equiv 1 \ mod(p) .$

Let us denote $\Q(\zeta_n), \zeta_n^{p^n}=1$ by $\KK_{n-1} .$ Sometimes we denote $\KK_0$ simply by $\KK.$
Let $\MM/\KK$ be the Sylow $p$-part of the ray class field extension associated with the group $H.$
Hence, $\MM/\KK$ is an abelian extension with the Galois group $Gal (\MM/\KK)\cong (I_0 (\KK)/H)_p$,
where $I_0 (\KK)$ is the group of ideals of $\KK$ prime to $p$ and
$(I_0 (\KK)/H)_p$ is the Sylow $p$-subgroup
of the finite group $I_0 (\KK)/H$ (the group of all ideals of
$\KK$ will be denoted by $I(\KK)$).

We remind the reader that groups $B$ and $B_p$ were defined in subsection 1.2.
\begin{theorem}
The map  $M(\alpha,\Z[\zeta_2], h)\to h^{-1}\alpha$ defines isomorphisms   $B\cong I_0 (\KK)/H$ and $B_p \cong (I_0 (\KK)/H)_p .$
\end{theorem}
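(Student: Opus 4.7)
The plan is to define a group homomorphism $\phi: B \to I_0(\KK)/H$ on the generators of $B$ described in subsection 1.2 (property 6), verify well-definedness with respect to the isomorphism relations of property 1 and the choice of lift of $h$, and then establish bijectivity using the freeness criterion of property 2 together with a direct pre-image construction. The isomorphism $B_p \cong (I_0(\KK)/H)_p$ is then an automatic consequence of the naturality of $\phi$ by restriction to $p$-Sylow subgroups.

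Given a generator $M(\alpha, \Z[\zeta_2], h)$, one first normalises $\alpha$ so that it is prime to $p$; this is possible because the unique prime of $\Z[\zeta_1]$ above $p$, namely $(1-\zeta_1)$, is principal, so property~1 permits such a replacement at the expense of modifying $h$. Then choose any lift $\tilde h \in \Z[\zeta_1]$ of $h$ that is prime to $p$ (such a lift exists because $F=\Z[\zeta_1]/(p)$ is local with principal maximal ideal) and set $\phi(M(\alpha, \Z[\zeta_2], h))$ to be the class of the fractional ideal $\tilde h^{-1}\alpha$ in $I_0(\KK)/H$.

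Well-definedness is the technical core. Two lifts $\tilde h, \tilde h'$ of $h$ differ by an element of $p\Z[\zeta_1]$, so $\tilde h/\tilde h'\equiv 1\pmod p$ in the $(1-\zeta_1)$-adic sense, whence $(\tilde h/\tilde h')\in H$. For a property~1 move $(\alpha,h)\rightsquigarrow (q_1\alpha, j_1(q_1^{-1})h)$ with $q_1$ prime to $p$, one chooses a matching lift $\hat q_1\tilde h$ of the new gluing unit (where $\hat q_1$ lifts $j_1(q_1^{-1})$, so $q_1\hat q_1\equiv 1\pmod p$), and the apparent discrepancy between the new ideal $(\hat q_1\tilde h)^{-1}(q_1\alpha)$ and $\tilde h^{-1}\alpha$ collapses to a principal factor whose generator is congruent to $1$ modulo $p$. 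The analogous check for changes in the $\Z[\zeta_2]$-slot invokes Theorem~1.3 to rewrite $j_2$ of a unit as $j_1$ of a norm, and norms of units of $\Z[\zeta_2]$ are units of $\Z[\zeta_1]$, which contribute trivially modulo $H$.

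Multiplicativity of $\phi$ is immediate from property~3, since tensoring multiplies $\alpha$'s and $h$'s and products of lifts are lifts of products. Surjectivity is trivial with $h=1$: every class $[\mathfrak a]\in I_0(\KK)/H$ is hit by $M(\mathfrak a,\Z[\zeta_2],1)$. For injectivity, if $\phi(M(\alpha,\Z[\zeta_2],h))=1$, then $\tilde h^{-1}\alpha=(u)$ for some $u\equiv 1\pmod p$, so $\alpha=(\tilde hu)$ is principal with a generator $r$ whose image in $F$ is exactly $h$; then property~2 with $r_1=r$, $r_2=1$ and $\epsilon=1$ certifies that $M(\alpha,\Z[\zeta_2],h)$ is free, and its class in $B$ vanishes. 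The step I expect to be most delicate is the property~1 well-definedness check, where one has to track the lift of $j_1(q_1^{-1})h$ and the enlarged ideal $q_1\alpha$ in a coordinated way so that their combined contribution to the fractional ideal really does differ from $\tilde h^{-1}\alpha$ only by an element of $H$.
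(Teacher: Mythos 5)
Your proof is correct and follows essentially the same route as the paper: surjectivity is immediate from modules of the form $M(\mathfrak{a},\Z[\zeta_2],1)$, and injectivity reduces to the freeness criterion (property 2 of subsection 1.2) applied to a module whose associated ideal $h^{-1}\alpha$ lies in $H$. The paper's own proof is a compressed version of this argument, treating as obvious the well-definedness checks (choice of lift of $h$, property 1 moves, Theorem 1.3 for the $\Z[\zeta_2]$-slot) that you spell out.
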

\begin{proof}
Clearly this map is surjective. Let us prove that the map is injective. Indeed, its kernel consists of elements such
that $h^{-1}\alpha$ is a principal ideal, which possesses a generator $g\equiv 1 \ mod(p) .$
However, it follows from the properties of invertible modules over the ring $A$ (subsection 1.2) that such
$M(\alpha,\Z[\zeta_2], h)$ is free.
\end{proof}
\begin{corollary}
Let $\LL$ be the Sylow $p$-part of the Hilbert class field of $\KK .$
\begin{itemize}
\item Then $Gal(\LL/\KK)\cong S_1$,
$Gal(\MM/\LL)\cong V ,$ and the Mayer-Vietoris exact sequence $0\to V\to B_p\to S_1$ becomes the exact
sequence of the Galois groups of the tower of field extensions $\KK\subset\LL\subset\MM.$
\item Consequently, there are exactly $R-r$ cyclic extensions $\MM_{\alpha}/\KK$ such that $\MM_\alpha\subset\MM$,
$Gal(\MM_{\alpha}/\KK)$
is a cyclic group of order $p^k ,k\geq 2$ and  $Gal(\MM_{\alpha}/\MM_\alpha \cap \LL)$ is cyclic of order
$p$ or if you wish $Gal((\MM_\alpha \cap \LL)/\KK)$ is cyclic of
order $p^{k-1}$.
\item Let $ \MM_-$
be the extension of $\KK$ with the Galois group $(B_p)_- .$
Then there are $R_- -r_-$ cyclic extensions $\MM_{\alpha}/\KK$ such that $\MM_\alpha\subset(\MM)_-$, $Gal(\MM_{\alpha}/\KK)$
is a cyclic group of order $p^k ,k\geq 2$ and  $Gal(\MM_{\alpha}/\MM_\alpha \cap \LL)$ is cyclic of order
$p.$
\end{itemize}

\end{corollary}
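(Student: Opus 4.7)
The plan is to combine global class field theory with the previous theorem and the explicit structural description of $B_p$ from Corollary \ref{cor}. First, by Artin reciprocity, the Galois group of the Hilbert class field of $\KK$ is isomorphic to $Cl(\KK)=Cl(\Z[\zeta_1])$; passing to Sylow $p$-parts yields $Gal(\LL/\KK)\cong S_1$. The Sylow $p$-part of the ray class field attached to the modulus $(p)$ has Galois group $(I_0(\KK)/H)_p$, which is identified with $B_p$ by the previous theorem. The natural surjection $I_0(\KK)/H\to Cl(\KK)$ restricts on the $p$-part to the Mayer--Vietoris quotient map $B_p\to S_1$, so by Galois correspondence $\LL=\MM^V$, and the short exact sequence $0\to V\to B_p\to S_1\to 0$ is precisely the sequence of Galois groups of the tower $\KK\subset\LL\subset\MM$.

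For the middle bullet, I would invoke the decomposition $B_p\cong\FF_p^{(p-3)/2}\oplus B_1$ from Corollary \ref{cor}, in which $B_1$ has $R$ generators $\beta_i$ in bijection with generators $\alpha_i$ of $S_1$, with $exp(\beta_i)=exp(\alpha_i)$ for $1\le i\le r$ and $exp(\beta_i)=exp(\alpha_i)+1$ for $r+1\le i\le R$. For each $i$ in the latter range, the projection $B_p\twoheadrightarrow\langle\beta_i\rangle$ gives, via Galois correspondence, a cyclic subextension $\MM_{\alpha_i}/\KK$ of degree $p^k$ with $k=exp(\alpha_i)+1\ge 2$, whose induced quotient $\langle\beta_i\rangle\to\langle\alpha_i\rangle$ has kernel of order $p$. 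This translates into $Gal(\MM_{\alpha_i}/\MM_{\alpha_i}\cap\LL)$ cyclic of order $p$ and $Gal(\MM_{\alpha_i}\cap\LL/\KK)$ cyclic of order $p^{k-1}$. Conversely, any cyclic quotient $B_p\twoheadrightarrow C$ of order $p^k\ge p^2$ with the prescribed property must factor through a rank-one summand $\langle\beta_i\rangle$ with $i>r$: the $\FF_p^{(p-3)/2}$ summand has exponent $p$ (forcing $k=1$), while each $\langle\beta_i\rangle$ with $i\le r$ maps isomorphically to $\langle\alpha_i\rangle\subset S_1$ and therefore cannot produce the one-$p$-power drop between $\MM_\alpha$ and $\MM_\alpha\cap\LL$. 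This yields exactly $R-r$ such subfields.

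The third bullet is the same argument transported to the minus part: the analogous structure statement for $(B_p)_-$ (the preceding corollary of subsection 1.4) supplies exactly $R_- - r_-$ generators with exponent strictly greater than that of the corresponding $\alpha_i$, and each of these produces one cyclic subextension of $\MM_-$ of degree $p^k\ge p^2$ enjoying the stated Galois data.

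The main technical point I expect to wrestle with is the converse part of the counting, i.e.~showing that the $R-r$ extensions listed above are not only present but exhaustive. All the needed information is encoded in the filtration $V\subset B_p$ together with the explicit exponent shift from Corollary \ref{cor}, so the task reduces to careful bookkeeping of cyclic quotients of a finite abelian $p$-group compatible with that filtration; no further arithmetic input is required.
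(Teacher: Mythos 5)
Your first bullet and the overall strategy coincide with what the paper actually does: the corollary is stated there without a separate proof, being treated as immediate from Theorem 2.1 (the isomorphism $B_p\cong (I_0(\KK)/H)_p$), Artin reciprocity for the Hilbert class field, and the structure statements of Corollaries 1.7 and 1.16; your observation that the natural surjection $I_0(\KK)/H\to Cl(\KK)$ restricts on $p$-parts to the Mayer--Vietoris map, so that $\LL=\MM^{V}$, is exactly the implicit content, and that part of your argument is fine.

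The gap is in your exhaustiveness step for the second and third bullets. The assertion that every cyclic quotient $B_p\to C$ of order $p^k$, $k\ge 2$, whose restriction to $V$ has image of order $p$ must factor through one of the chosen rank-one summands $\langle\beta_i\rangle$ with $i>r$ is false, because cyclic quotients can mix generators. Concretely, suppose $\beta_1$ is a jump generator of order $p^{2}$ (so $\beta_1^{p}\in V$) and $\beta_2$ is any other generator; the homomorphism killing all remaining summands and sending $\beta_1$ to a generator $c$ of $\Z/p^{2}\Z$ and $\beta_2$ to $pc$ is surjective, its kernel differs from the kernel of the coordinate projection onto $\langle\beta_1\rangle$ (the latter contains $\beta_2$, the former does not), and the image of $V$ is still of order exactly $p$ since $V$ maps into $p\Z/p^{2}\Z$ and $\beta_1^{p}\mapsto pc\neq 0$. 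By Galois correspondence the fixed field of this kernel is a further cyclic subextension of $\MM$ of degree $p^{2}$ with $Gal(\MM_\alpha/\MM_\alpha\cap\LL)$ of order $p$, distinct from the ones you listed; twisting by characters of the $\FF_p^{(p-3)/2}$ summand produces still more. So, read literally as a count of subfields, ``exactly $R-r$'' is not provable by your converse (nor at all, once $B_p$ has rank at least $2$); the count has to be understood as the number of \emph{independent} such extensions, equivalently the number of cyclic factors of $B_p$ whose exponent strictly exceeds that of the corresponding factor of $S_1$ --- an invariant of the embedding $V\subset B_p$ which your Corollary 1.7 bookkeeping does compute, and which is all that is used later (in the proof of Theorem 2.9 one only needs that the $r_0$ independent extensions constructed there force $R_--r_-\ge r_0$). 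If you reformulate the middle and last bullets in terms of independent extensions, your construction of the $R-r$ (respectively $R_--r_-$) extensions from the coordinate projections, together with the observation that the summands with $i\le r$ and the $\FF_p^{(p-3)/2}$ part contribute none, completes the argument; as written, the ``must factor through a rank-one summand'' claim is the step that fails.
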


Now, we will need the diagram of field extensions as illustrated below.
\begin{center}
  \includegraphics[scale=0.5]{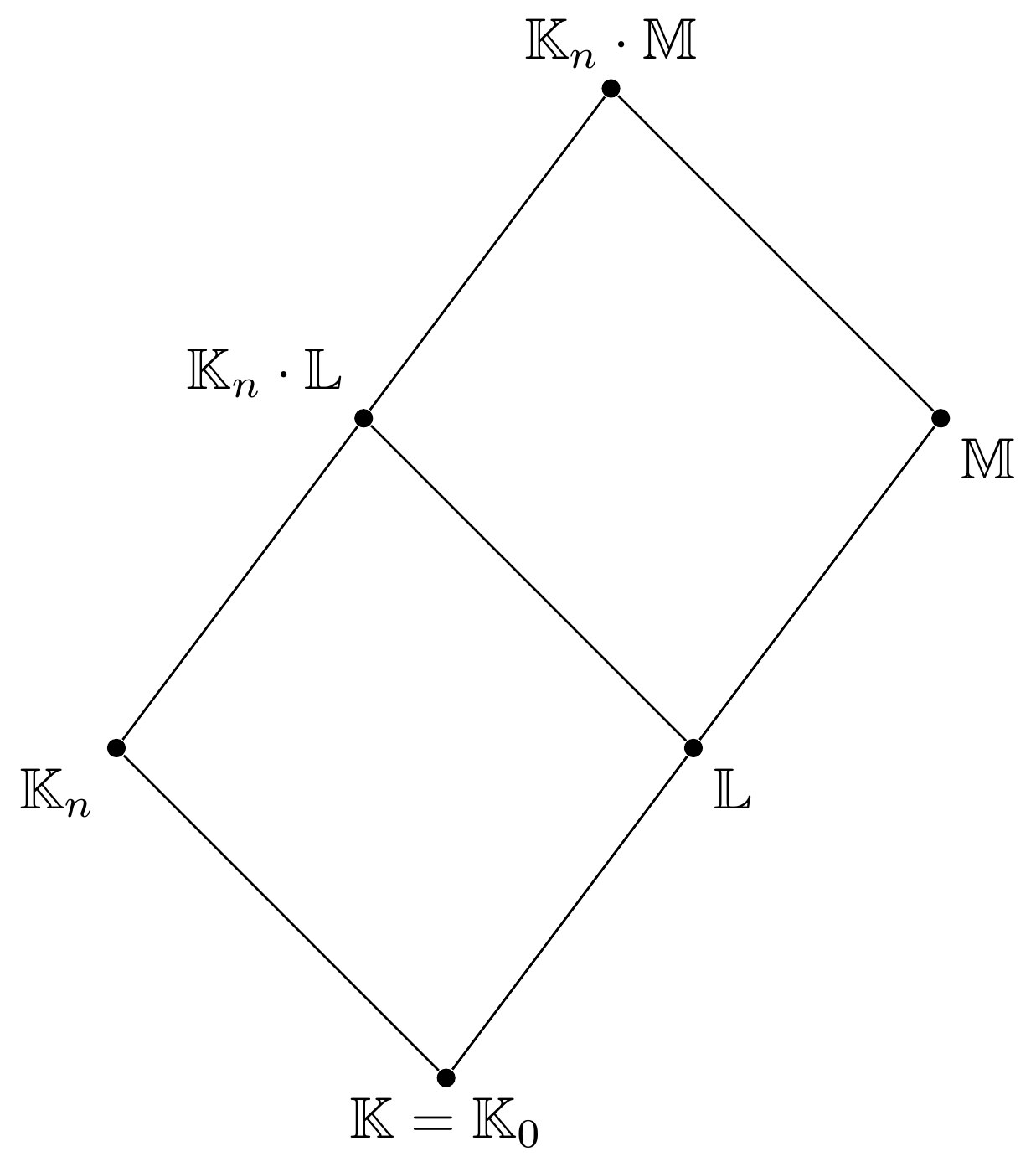}
\end{center}

\begin{lemma}
$\KK_n \cap\MM=\KK$
\end{lemma}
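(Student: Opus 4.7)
The plan is to combine Artin reciprocity (in the form of the theorem just proved) with the cyclicity of $\KK_n/\KK$ to reduce the lemma to one explicit test. Since $\mathrm{Gal}(\KK_n/\KK)$ is the subgroup $1+p\Z/p^{n+1}\Z$ of $(\Z/p^{n+1}\Z)^{*}$, it is cyclic of order $p^n$; so the intermediate fields form the chain $\KK\subset\KK_1\subset\cdots\subset\KK_n$, and any proper inclusion $\KK\subsetneq\KK_n\cap\MM$ would force $\KK_1\subset\MM$. Thus it suffices to prove $\KK_1\not\subset\MM$.

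By the previous theorem, $\mathrm{Gal}(\MM/\KK)\cong(I_0(\KK)/H)_p$ via the Artin map, so (since $[\KK_1:\KK]=p$) the inclusion $\KK_1\subset\MM$ is equivalent to $H$ lying inside the Artin kernel of $\KK_1/\KK$. Since $\KK_1=\KK(\zeta_2)$, the Artin symbol of an ideal $\mathfrak{a}\in I_0(\KK)$ acts on $\zeta_2$ as $\zeta_2\mapsto\zeta_2^{N(\mathfrak{a})}$, where $N$ is the absolute ideal norm. Hence the inclusion above would demand $N((g))\equiv 1\pmod{p^2}$ for every $g\in\Z[\zeta_1]$ with $g\equiv 1\pmod p$.

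The crux is to refute this by a single explicit choice. I take $g=1+p\in\Z\subset\Z[\zeta_1]$, which is coprime to $p$ and satisfies $g\equiv 1\pmod p$, so $(g)\in H$. Because $g$ is rational, $N_{\KK/\Q}(g)=(1+p)^{p-1}$, and a one-line binomial expansion gives $(1+p)^{p-1}\equiv 1-p\pmod{p^2}$, which is not $\equiv 1\pmod{p^2}$. Hence the Artin symbol of $(g)$ is nontrivial on $\KK_1$, $H$ is not contained in the Artin kernel, and $\KK_1\not\subset\MM$. The only nontrivial ingredient is the choice of the test element; once $g=1+p$ is fixed, the verification is immediate, and I do not foresee any other obstacle. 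A structurally equivalent alternative would be to compare conductors: the conductor of $\MM/\KK$ divides $(p)$, while a conductor-discriminant computation shows that the conductor of $\KK_1/\KK$ equals the $p$-th power of the prime of $\KK$ above $p$, which does not divide $(p)$.
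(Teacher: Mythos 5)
Your proof is correct and follows essentially the same route as the paper: reduce via the cyclic tower $\KK\subset\KK_1\subset\cdots\subset\KK_n$ to showing $\KK_1\not\subset\MM$, identify the Artin kernel of $\KK_1/\KK$ with the ideals of norm $\equiv 1 \bmod p^2$ (the paper cites this as the ray group $P$ from Kervaire--Murthy, you derive it from the Frobenius action on $\zeta_2$), and refute the inclusion with the same test element $\beta=(1+p)$ and the same computation $(1+p)^{p-1}\equiv 1-p\not\equiv 1\bmod p^2$. The conductor comparison you mention at the end is a valid alternative but not needed.
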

\begin{proof}
We follow the proof of Lemma 4.4 of \cite{KM}.
If $\KK_n \cap\MM\neq\KK$, then $\KK_n \cap\MM\supset\KK_1$.
Therefore, it is sufficient to prove that $\KK_1$
{\it is not contained in} $\MM.$ Indeed, $\MM$ is associated with the ray group $H$ defined above. As it is well-known, $\KK_1$ is a class field over
$\KK=\KK_0$ associated with the ray group
$$P=\{  \alpha\in I(\KK) :\ Norm(\alpha)\equiv 1\ mod (p^2)\} ,$$
(see \cite{KM} and references therein).
Here, the norm $Norm$ acts from $\KK$ to $\QQ$.Hence, we must prove that $H\not\subset P .$

Let us choose $\beta=(1+p) .$ Clearly, $\beta\in H.$ On the other hand,
$$Norm (1+p)=(1+p)^{p-1}\equiv \ 1+(p-1)p \ mod(p^2)\not\equiv 1\ mod(p^2) . $$
Hence, $\beta\not\in P$ and $H\not\subset P$. The lemma is proved.
\end{proof}
\begin{corollary}
$Gal(\KK_n\cdot\MM/\KK_n)\cong Gal(\MM/\KK) .$
\end{corollary}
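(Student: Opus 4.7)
The plan is to obtain the corollary as a direct application of the standard compositum formula from Galois theory, using the preceding lemma as the essential input. Since $\MM/\KK$ is the Sylow $p$-part of a ray class field extension, it is abelian over $\KK$ and in particular normal; likewise the cyclotomic extension $\KK_n/\KK$ is abelian. Thus the compositum $\KK_n\cdot\MM / \KK_n$ is Galois, and one may compare its Galois group with $Gal(\MM/\KK)$.

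First I would define the restriction homomorphism $\rho : Gal(\KK_n\cdot\MM/\KK_n) \to Gal(\MM/\KK)$ by $\sigma \mapsto \sigma|_\MM$. It is well-defined because normality of $\MM/\KK$ forces $\sigma(\MM)=\MM$ for every $\sigma \in Gal(\KK_n\cdot\MM/\KK_n)$. Injectivity is then immediate: any $\sigma \in \ker\rho$ fixes both $\KK_n$ and $\MM$ pointwise, hence fixes the entire compositum $\KK_n\cdot\MM$, so $\sigma = \id$.

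For surjectivity, the image $\rho(Gal(\KK_n\cdot\MM/\KK_n))$ is a subgroup of $Gal(\MM/\KK)$ whose fixed field inside $\MM$ is precisely $\MM \cap \KK_n$. The preceding lemma identifies this intersection with $\KK$, so the fixed field is $\KK$ itself, forcing the image to equal all of $Gal(\MM/\KK)$. Combined with injectivity, $\rho$ is the desired isomorphism. The conceptual work has already been done in the lemma, and there is no genuine obstacle remaining: the argument is a mechanical use of the compositum theorem. The one sanity check I would make explicit is the normality of $\MM/\KK$ used to guarantee that restriction lands in $Gal(\MM/\KK)$; this is built into the construction of $\MM$ as a class field.
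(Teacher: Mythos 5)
Your proof is correct and follows exactly the route the paper intends: the corollary is stated immediately after the lemma $\KK_n\cap\MM=\KK$ precisely because the standard compositum (restriction) theorem of Galois theory, which you spell out, reduces it to that lemma. Nothing is missing.
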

The main aim of this section is to prove {\bf the second inequality}
namely $r_0\leq R_- -r_-$. To do this, we will study
$(Gal(\KK_n\cdot\MM/\KK_n))_- =(Gal (\MM/\KK))_- =(B_p)_- .$

We make an {\bf important}
\begin{remark}
Since $p$ is odd, we have $(B_p)_-\cong ((I_0 (\KK)/H)_p )_-\cong
\newline ((I_0 (\KK)/\tilde{H})_p )_-$, where $\tilde{H}$
is the group
of those principal ideals of $\KK_0=\Q(\zeta_1)$, which possess a generator $g$ such that
$g\equiv 1 \ mod(\zeta_1 -1)^p .$
Indeed, there is an obvious surjection $(I_0 (\KK)/\tilde{H})_p\to (I_0 (\KK)/H)_p$
and its kernel is the ideal $(p+1)$, which is contained in $((I_0 (\KK)/\tilde{H})_p)_+$.
Hence, the odd parts are isomorphic.
\end{remark}
From now on we will use notation $H_n$ for the group of those principal
ideals of $\KK_n=\Q(\zeta_{n+1})$, which possess a generator $g$ such that
$g\equiv 1 \ mod(\zeta_{n+1} -1)^{p^{n+1}} $ $(H_0=\tilde{H}).$

Let $\MM_n/\KK_n$ be the Sylow $p$-part of the ray class field extension associated with the group $H_n.$
Hence, $\MM_n/\KK_n$ is an abelian extension with the Galois group $Gal (\MM_n/\KK_n)\cong (I_0 (\KK_n)/H_n)_p$,
where $I_0 (\KK_n)$ is the group of ideals of $\KK_n$ prime to $p$ and $(I_0 (\KK_n)/H_n)_p$ is the Sylow $p$-subgroup
of the finite group $I_0 (\KK_n)/H_n.$
Denote the group $Gal(\KK_n/\KK_0 )$ by $G_n$. It naturally acts
on $I(\KK_n)/H_n.$
Let us remind the reader a standard notation:
if a cyclic group $A$ generated by $a\in A$ acts on a group $B,$ then
$B^A=\{b\in B: a(b)=b\} .$

Our next goal is to prove
\begin{theorem}
$H^0 (G_n , I_0 (\KK_n)/H_n)=(I_0 (\KK_n)/H_n)^{G_n}=I_0 (\KK_0)/H_0.$
\end{theorem}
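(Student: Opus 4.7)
The plan is to show that the extension-of-ideals map
$$\Phi : I_0(\KK_0)/H_0 \longrightarrow (I_0(\KK_n)/H_n)^{G_n}, \qquad [\mathfrak{a}] \longmapsto [\mathfrak{a}\Z[\zeta_{n+1}]]$$
is an isomorphism. Well-definedness follows from the total ramification of the unique prime $\mathfrak{p}_0 := (\zeta_1-1)$ above $p$ in $\KK_n/\KK_0$ with uniformizer $\zeta_{n+1}-1$, which yields $\mathfrak{p}_0^p \cdot \Z[\zeta_{n+1}] = (\zeta_{n+1}-1)^{p^{n+1}}$; consequently, any $g_0 \in \Z[\zeta_1]$ with $g_0 \equiv 1 \bmod (\zeta_1-1)^p$ satisfies the stronger congruence $g_0 \equiv 1 \bmod (\zeta_{n+1}-1)^{p^{n+1}}$ in $\Z[\zeta_{n+1}]$, and the extended ideal is manifestly $G_n$-invariant.

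To prove $\Phi$ is bijective, I would apply $G_n$-cohomology to the short exact sequence of $G_n$-modules
$$1 \to H_n \to I_0(\KK_n) \to I_0(\KK_n)/H_n \to 1,$$
obtaining
$$1 \to H_n^{G_n} \to I_0(\KK_n)^{G_n} \to (I_0(\KK_n)/H_n)^{G_n} \to H^1(G_n, H_n).$$
The claim reduces to three steps: (i) $I_0(\KK_n)^{G_n} = I_0(\KK_0) \cdot \Z[\zeta_{n+1}]$; (ii) $H_n^{G_n} = H_0 \cdot \Z[\zeta_{n+1}]$ under that identification; (iii) vanishing of the image of the connecting map into $H^1(G_n, H_n)$. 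Step (i) is immediate: $\KK_n/\KK_0$ is unramified at every prime $\mathfrak{p}\neq \mathfrak{p}_0$, so $G_n$ permutes the primes above $\mathfrak{p}$ transitively, forcing the $G_n$-invariant ideals to be the powers of $\mathfrak{p}\cdot\Z[\zeta_{n+1}]$.

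Step (ii) is the substantive claim: if $(g) = \mathfrak{a}\Z[\zeta_{n+1}]$ with $g \equiv 1 \bmod (\zeta_{n+1}-1)^{p^{n+1}}$, then $\mathfrak{a}$ is principal in $\Z[\zeta_1]$ with a generator $g_0 \equiv 1 \bmod (\zeta_1-1)^p$. The $G_n$-invariance of $(g)$ gives a $1$-cocycle $\sigma \mapsto \sigma(g)/g \in \Z[\zeta_{n+1}]^*$; by Hilbert's Theorem 90 for $\KK_n^*$ there exists $v \in \KK_n^*$ with $\sigma(v)/v = \sigma(g)/g$, so $h := g/v \in \KK_0^*$. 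The critical point is that $v$ must in fact lie in $\Z[\zeta_{n+1}]^*$: this is the claim that the congruence on $g$ rules out capitulation of non-principal $\mathfrak{a}$, and it uses the principal units filtration at $(\zeta_{n+1}-1)$ combined with the fixed-point identity $(\Z[\zeta_{n+1}]^*)^{G_n} = \Z[\zeta_1]^*$. Once $v$ is secured as a unit, $(g)=(h)$ in $\Z[\zeta_{n+1}]$, so $\mathfrak{a}=(h)$ in $\Z[\zeta_1]$; the residual freedom to multiply $h$ by an element of $\Z[\zeta_1]^*$ is then used to arrange $h \equiv 1 \bmod (\zeta_1-1)^p$. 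Step (iii) is handled by a parallel Hilbert 90 argument applied to the sequence $1 \to W_n \cap \Z[\zeta_{n+1}]^* \to W_n \to H_n \to 1$, where $W_n = \{g \in \KK_n^* : g \equiv 1 \bmod (\zeta_{n+1}-1)^{p^{n+1}}\}$, giving surjectivity of $\Phi$.

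The principal obstacle is the Hilbert 90 descent in step (ii): in general Hilbert 90 produces only $v \in \KK_n^*$, not a unit, and one must use the strong congruence on $g$ to eliminate the potential capitulation contribution. Carrying this out requires a careful analysis of the filtration of $\Z[\zeta_{n+1}]^*$ by powers of the maximal ideal $(\zeta_{n+1}-1)$ and the action of $G_n$ thereon, where the structure of $\KK_n/\KK_0$ as a totally ramified cyclic $p$-extension of degree $p^n$ at $\mathfrak{p}_0$ is essential throughout.
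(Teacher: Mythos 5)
Your skeleton (Galois descent applied to $1\to H_n\to I_0(\KK_n)\to I_0(\KK_n)/H_n\to 1$, with the extension-of-ideals map $\Phi$) is a legitimate alternative framing to the paper's, and your well-definedness remark and step (i) are fine. But the proposal has a genuine gap exactly where you yourself locate ``the principal obstacle'': step (ii) is asserted, not proved, and the tools you name cannot deliver it. The elements $v\in\KK_n^*$ with $\sigma(v)/v=\sigma(g)/g$ for all $\sigma\in G_n$ are precisely $v=gk$ with $k\in\KK_0^*$, so asking that one of them be a unit of $\Z[\zeta_{n+1}]$ is literally equivalent to $\mathfrak{a}$ being principal in $\KK_0$, i.e.\ to the non-capitulation statement you are trying to establish; equivalently, you must show the class of the cocycle $\sigma\mapsto\sigma(g)/g$ in $H^1(G_n,\Z[\zeta_{n+1}]^*)$ vanishes. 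Nothing in the unit filtration or in the (trivially true) identity $(\Z[\zeta_{n+1}]^*)^{G_n}=\Z[\zeta_1]^*$ forces this; the congruence on $g$ has to enter through an actual argument, and none is given. The paper supplies precisely this missing mechanism, and it is local--global rather than unit-theoretic: it reduces to one degree-$p$ step $G=Gal(\KK_m/\KK_{m-1})$, works with the idelic exact sequence $0\to \KK_m^*\cap U_{m,\omega_m}\to J^S(\KK_m)/U^S(\KK_m)\to I_0(\KK_m)/H_m\to 0$ (computing the invariants of the middle term via $H^1(G,U^S(\KK_m))=1$, unramifiedness outside $S$), and then proves the key lemma $H^1(G,\KK_m^*\cap U_{m,\omega_m})=1$ --- in your notation $H^1(G,W_m)=1$ --- by combining Hilbert 90 in $\KK_m^*$, the triviality of $H^1(G,U_{m,\omega_m})$ for the local principal units at the totally ramified prime (via the $p$-adic logarithm), and density of $\KK_{m-1}$ in its completion to glue the two coboundaries. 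Your step (iii) has the same defect: a ``parallel Hilbert 90 argument'' for $1\to W_n\cap\Z[\zeta_{n+1}]^*\to W_n\to H_n\to 1$ is not available off the shelf, since Hilbert 90 applies to $\KK_n^*$ and not to the subgroup $W_n$ (the vanishing $H^1(G_n,W_n)=1$ is exactly the nontrivial lemma above), and even granting it the long exact sequence only maps $H^1(G_n,H_n)$ into $H^2(G_n,W_n\cap\Z[\zeta_{n+1}]^*)$, which you have not shown to vanish, so the image of your connecting map is still uncontrolled.

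There is a second, smaller gap at the end of step (ii). Even once $v$ is a unit and $h=g/v\in\KK_0^*$ generates $\mathfrak{a}$, you still need some $u\in\Z[\zeta_1]^*$ with $uh\equiv 1\ mod\ (\zeta_1-1)^p$. From $g=hv\equiv 1\ mod\ (\zeta_{n+1}-1)^{p^{n+1}}$ you only learn that $h$ is congruent, modulo $(\zeta_1-1)^p$, to the inverse of a unit of $\Z[\zeta_{n+1}]$; replacing that unit by one of $\Z[\zeta_1]$ requires the compatibility of unit residues along the tower (the paper's Theorem 1.2, $j_2(a)=j_1(N(a))$, respectively the congruence $Norm(a+b)\equiv Norm(a)+Norm(b)\ mod\ (p)$ invoked later in Claim 2), which your ``residual freedom'' remark silently assumes. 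So the route you sketch could in principle be completed, but both of its load-bearing steps are exactly the cohomological vanishing and norm-compatibility that the paper proves by its idelic, logarithm-plus-density argument, and as written they remain unproved.
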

\begin{proof}
Obviously, it is sufficient to prove that
$$H^0 (G , I_0 (\KK_m)/H_m)=(I_0 (\KK_m)/H_m)^{G}=I_0 (\KK_{m-1})/H_{m-1},$$
where $G=Gal (\KK_m/\KK_{m-1}) .$

For this will need an idelic interpretation of $I_0 (\KK_m)$.
Let $J(k)$ be the idele group of a global field $k$.
For any valuation $v$, let $k_v$ be the corresponding completion.

Let $S$ be a finite subset of the set of all
valuations of $k.$

Denote by
$J^S (k)\subset J(k)$ the subgroup of elements which have $1$ at all $v$-components,
$v\in S$ and let $U^S (k)\subset J^S (k)$ be a subgroup, which has a unit of $k_v$ at
any $v$-component (clearly $v\notin S $).

In our case, for all $\KK_n$, $S$ will contain two elements: the archimedean valuation
and $\omega_n=(\zeta_{n+1} -1)$. We have a surjection $J^S (\KK_m)\to I_0 (\KK_m)$
with the kernel $U^S (\KK_m)$. Consequently, we have a surjection
$J^S (\KK_m)\to I_0 (\KK_m)/H_m$ with the kernel
$(\KK_m^* \cap U_{m,\omega_m })\times U^S (\KK_m)$. Here $\KK_m^*=\KK_m\backslash\{0\}, $
$U_{m,\omega_m }$ is the set of units of $(\KK_m)_{\omega_m}$, which are congruent $1$ {\it modulo}
$\omega_m^{p^{m+1}}=(\zeta_{m+1}-1)^{p^{m+1}}$,
and $\KK_m^*$ is embedded into $J^S(\KK_m)$ diagonally.

So, we have an exact sequence of $G$-modules:
$$0\to (\KK_m^* \cap U_{m,\omega_m })\to J^S (\KK_m)/U^S (\KK_m)\to I_0 (\KK_m)/H_m \to 0\ \ \ (1)$$

Let us prove first that
$H^0 (J^S (\KK_m)/U^S (\KK_m))=(J^S (\KK_m)/U^S (\KK_m))^G=J^S (\KK_{m-1})/U^S (\KK_{m-1}).$
To do this, we consider another exact sequence of $G$-modules:
$0\to U^S (\KK_m)\to J^S (\KK_m)\to J^S (\KK_m)/U^S (\KK_m)\to 0.$

Clearly, we have
\begin{itemize}
  \item $(U^S (\KK_m))^G=U^S (\KK_{m-1})$;
  \item $(J^S (\KK_m))^G=J^S (\KK_{m-1})$;
  \item $H^1 (G,U^S (\KK_m))=\{1\}$ because for all valuations $v\notin S$ of $\KK_m$,  local extensions $(\KK_m)_v /(\KK_{m-1})_{\tilde v}$ are unramified. Here the valuation
      $\tilde{v}$ of $\KK_{m-1}$ lies under $v$
  (see \cite{CF} for details).
\end{itemize}
Now, the corresponding exact sequence of cohomology groups reads as
$$0\to U^S (\KK_{m-1})\to J^S (\KK_{m-1})\to (J^S (\KK_m)/U^S (\KK_m))^G \to 0 ,$$
what proves that
$(J^S (\KK_m)/U^S (\KK_m))^G=J^S (\KK_{m-1})/U^S (\KK_{m-1}). $

Let us return to the exact sequence (1) and the corresponding exact cohomology sequence:
\begin{gather*}
0\to (\KK_m^* \cap U_{m,\omega_m })^G \to J^S (\KK_{m-1})\to (J^S (\KK_m)/U^S (\KK_m))^G \\
\to H^1 (G, \KK_m^* \cap U_{m,\omega_m })
\end{gather*}

The following observation is clear: $(\KK_m^* \cap U_{m,\omega_m })^G$ consists
of the elements of $x\in \KK_{m-1}^*$ such $x\equiv 1\ mod\ \omega_m^{p^{m+1}}$.
Since $\omega_m^{p^{m+1}}=\omega_{m-1}^{p^{m}}$, we deduce that
$(\KK_m^* \cap U_{m,\omega_m })^G=\KK_{m-1}^* \cap U_{m-1,\omega_{m-1} } .$

Thus, the statement of the theorem will follow from that of
$H^1 (G, \KK_m^* \cap U_{m,\omega_m })=\{ 1\} .$ Let us prove this.

Any $1$-cocycle is generated by some $x\in \KK_m^* \cap U_{m,\omega_m } $ such
that \newline
$Norm_{\KK_m/\KK_{m-1}}(x)=1. $ Hilbert 90 immediately implies that
$x=\sigma (a)/a$ for some $a\in\KK_m^*$ ($\sigma$ is a generator of $G$).

On the other hand, the same $x$ generates a $1$-cocycle of $Z^1 (G, U_{m,\omega_m }).$
Let us prove that $H^1 (G, U_{m,\omega_m })$ is trivial.
Indeed, let $u\in U_{m,\omega_m }.$ Then the series $log (u)$ converges  that provides an
isomorphism  $U_{m,\omega_m }\cong\omega_m^{p^{m+1}}\cdot O((\KK_m)_\omega)$. Here, $O(k_v)$
denotes the ring of integers of the local field $k_v$. Since $O((\KK_m)_\omega)$ is a free
$G$-module, we obtain $H^1 (G, U_{m,\omega_m })$ is trivial. Consequently, $x=\sigma (u)/u$
for some $u\in U_{m,\omega_m }.$

Now, it is not difficult to finish the proof of the theorem. We have
$x=\sigma (a)/a=\sigma (u)/u.$ Therefore, $t=a/u \in (\KK_{m-1})_\omega$.
We can find $a_1\in\KK_{m-1}$ and $u_1\in U_{m-1,\omega_{m-1} }$ such that $t=a_1 /u_1$
 because $\KK_{m-1}$ is dense in $( \KK_{m-1})_\omega$. Finally we get:
 $b=a/a_1=u/u_1 \in \KK_m^* \cap U_{m,\omega_m }$ and $\sigma (b)/b=\sigma (a)/a=x .$
 Thus, $H^1 (G, U_{m,\omega_m })$ is trivial and the theorem is proved.
\end{proof}
The main result of the section is
\begin{theorem}
$R_- - r_- \geq r_0 .$
\end{theorem}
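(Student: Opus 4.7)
The plan is to exhibit $r_0$ linearly independent cyclic subextensions $\MM_\alpha/\KK$ of $\MM_-$ meeting the hypotheses of the corollary preceding this theorem, i.e.\ cyclic of order $p^k$ with $k\geq 2$ and $Gal(\MM_\alpha/\MM_\alpha\cap\LL)$ cyclic of order $p$. That corollary identifies precisely $R_--r_-$ such extensions, so producing $r_0$ of them gives the inequality. The guiding principle is a Kummer-style reflection: the $r_0$-dimensional group $V_+$, consisting of real units $\epsilon\equiv 1\pmod p$ modulo the distinguished subgroup, should reflect to $r_0$ ``high-order'' cyclic pieces inside $(B_p)_-$. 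I realize this reflection at level $n$ of the cyclotomic tower $\KK_n$ and descend to $\KK$ via the $G_n$-invariants theorem just proved.

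First I would invoke the $G_n$-invariants theorem: passing to $p$-parts and minus parts, the identity $(Gal(\MM_n/\KK_n)_-)^{G_n}=(B_p)_-$ reduces the problem to constructing suitable $G_n$-invariant elements of $Gal(\MM_n/\KK_n)_-$ at level $n$. Next, for each basis element $[\epsilon_i]\in V_+$ represented by a real unit $\epsilon_i\in U(\Z[\zeta_1])$ with $\epsilon_i\equiv 1\pmod p$, I would produce by Kummer reflection at level $n$ a minus-part ideal class $\alpha_{\epsilon_i}\in Cl(\KK_n)_{p,-}$ whose $p$-th power is principal, generated by a global element linked to $\epsilon_i$ through local duality at the prime $\omega_n$. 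Since $\epsilon_i$ is $G_n$-fixed, the class $\alpha_{\epsilon_i}$ will be $G_n$-invariant, and its lift to $Gal(\MM_n/\KK_n)_-$ will have $p$-order exactly one higher than the order of $\alpha_{\epsilon_i}$ itself, precisely because the reflecting unit $\epsilon_i$ is non-trivial in $V_+$.

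These $G_n$-invariant lifts then descend, via the identity $(Gal(\MM_n/\KK_n)_-)^{G_n}=(B_p)_-$, to $r_0$ elements of $(B_p)_-$ whose images in $(S_1)_-$ are linearly independent and whose orders strictly exceed those images by a factor of $p$. Combined with the exact sequence $0\to V_-\to(B_p)_-\to(S_1)_-\to 0$ and the structural corollary describing $(B_p)_-$, this yields $R_--r_-\geq r_0$. The linear independence of the resulting classes $\alpha_{\epsilon_i}$ follows from the linear independence of the $[\epsilon_i]$ in $V_+$ via non-degeneracy of the Kummer pairing on the relevant quotient.

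The main obstacle is the second step --- the Kummer reflection itself --- namely the explicit $G_n$-equivariant construction of $\alpha_{\epsilon_i}$ from $\epsilon_i$ and the verification that the ray class lift gains an extra factor of $p$ in its order. This combines Kummer theory over $\KK_n$ with the vanishing $H^1(G,U_{m,\omega_m})=\{1\}$ and Hilbert $90$ --- precisely the cohomological tools just assembled in the proof of the $G_n$-invariants theorem --- so the technical scaffolding for the argument is already in place.
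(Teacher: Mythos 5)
There is a genuine gap: your outline reproduces the paper's counting framework (compare against the $R_--r_-$ odd cyclic pieces of order $p^k$, $k\geq 2$, and work at level $\KK_n$ using the invariants theorem), but it never actually produces the $r_0$ extensions. The entire content of the theorem sits in your second step, the ``Kummer reflection'' that is supposed to convert each real unit class $[\epsilon_i]\in V_+$ into a $G_n$-invariant minus ideal class of $\KK_n$ whose ray-class lift gains exactly one extra factor of $p$; you state this as a hope and explicitly flag it as the main obstacle, and no mechanism is given for why such a class exists or why the lift has the larger order. A reflection statement of this kind is essentially as strong as the inequality being proved, so assuming it is circular. The paper avoids any reflection: it takes the $r_0$ units $\epsilon$ of $\KK$ that are local $p^{m}$-powers at $\omega_0$ (with $m\geq 1$, i.e.\ $\epsilon\equiv 1\bmod p$) but not global $p$-th powers, and simply adjoins roots, setting $\MM_\alpha=\KK_n(\epsilon^{1/p^{m+1}})$. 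Oddness is immediate from Kummer's Lemma ($\epsilon$ real), ramification at $\omega_n$ is Explanation 2.8, containment in $\MM_n$ is a concrete local computation with the norm residue symbol, $(\epsilon,q)_{m+1}=(\gamma,q)_1=1$ for $(q)\in H_n$ via the Artin map (Claim 1), and containment in $\MM\cdot\KK_n$ is Claim 2. No ideal classes in $Cl(\KK_n)$ are constructed at all, and independence of the $r_0$ extensions comes for free from independence of the units modulo $p$-th powers; your stronger demand that the resulting classes have linearly independent images in $(S_1)_-$ with strictly smaller order is both unjustified and unnecessary.

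A second, smaller inaccuracy: the descent from level $n$ is not just the identity $H^0(G_n, I_0(\KK_n)/H_n)=I_0(\KK_0)/H_0$. Subextensions of $\MM_n/\KK_n$ lying in $\MM\cdot\KK_n$ correspond to the coinvariant quotient $(Gal(\MM_n/\KK_n))_{G_n}$, not to the invariant subgroup, and the paper needs an extra argument (equality of orders for cyclic $G_n$, surjectivity of the norm on class groups and on principal ray classes, using $Norm(a+b)\equiv Norm(a)+Norm(b)\bmod p$) to show the norm map $A_{G_n}\to A^{G_n}$ is an isomorphism before concluding that the $\sigma_n$-fixed element $\epsilon$ yields an extension inside $\KK_n\cdot\MM$. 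Your reduction ``to constructing $G_n$-invariant elements of $Gal(\MM_n/\KK_n)_-$'' silently identifies invariants with coinvariants and so skips this step as well.
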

\begin{proof}
The idea of the proof is to find $r_0$ cyclic extensions $\MM_\alpha$ of $\KK_n$ such that
\begin{enumerate}
  \item $\MM_\alpha\subset \MM\cdot\KK_n$;
  \item $Gal (\MM_\alpha/\KK_n)$ is an odd group with respect to the natural action of the
  complex conjugation;
  \item The extension $\MM_\alpha/\KK_n$ ramifies at $\omega_n .$
\end{enumerate}
There are obvious candidates for $\MM_\alpha$, namely
$\MM_\alpha=\KK_n (\epsilon^{1/p^{m_i +1}})$, where $\epsilon$'s are
units of $\KK=\KK_0$, which
are local $p^{m_i}$-powers ($m_i>0$) in $\KK_{\omega_0}$ but not $p$-powers in
$\KK .$ We have exactly $r_0$ such units. Let $m=m_i .$
Let $\gamma=\epsilon^{1/p^m}\in \KK_{\omega_0} .$
Then, the extension $\KK_{\omega_0}(\gamma^{1/p})/\KK_{\omega_0}$ ramifies and consequently
the extension $(\KK_n)_{\omega_n}(\gamma^{1/p})/(\KK_n)_{\omega_n}$ ramifies. Let us explain this.
\begin{explanation}
Indeed, this extension cannot become unramified of degree $p$ because it contains
a ramified subextension $\KK_{\omega_0}(\gamma^{1/p})/\KK_{\omega_0}$

Then, the only possibility is
that $(\KK_n)_{\omega_n}(\gamma^{1/p})=(\KK_n)_{\omega_n}$. Taking $n$ minimal with this property,
we get following relations between degrees:
$$
[(\KK_{n-1})_{\omega_{n-1}} (\gamma^{1/p}):(\KK_{n-1})_{\omega_{n-1}}]=
[(\KK_{n})_{\omega_n} :(\KK_{n-1})_{\omega_{n-1}}=p].
$$

Further,
$(\KK_{n-1})_{\omega_{n-1}} (\gamma^{1/p})\subset (\KK_{n})_{\omega_{n}} (\gamma^{1/p})$
and hence,
$(\KK_{n-1})_{\omega_{n-1}} (\gamma^{1/p})=(\KK_{n})_{\omega_{n}}=
(\KK_{n-1})_{\omega_{n-1}}(\zeta_{n}^{1/p}) .$
So,
$\zeta_n \gamma^{-1}=r^p ,\ r\in (\KK_{n-1})_{\omega_{n-1}} . $
The latter equality cannot be true even modulo
$(\zeta_n -1)^2$ that completes the explanation of the third statement.
\end{explanation}
The second statement is clear because by Kummer's Lemma $\epsilon$ is real and consequently,
$Gal (\KK_n (\epsilon^{1/p^{m+1}})/\KK_n)=(Gal (\KK_n (\epsilon^{1/p^{m+1}})/\KK_n))_- .$

It remains to prove the first statement.

{\bf Claim 1:} $\KK_n (\epsilon^{1/p^{m+1}})\subset\MM_n$.

To prove this claim we have to use some facts of the global class field theory (see \cite{CF}).
Let $F_{\KK_n  (\epsilon^{1/p^{m+1}})/\KK_n}: I_0 (\KK_n )\to Gal(\KK_n  (\epsilon^{1/p^{m+1}})/\KK_n)$
be the map constructed using Frobenius maps.
The Artin map (see again \cite{CF})
$\psi_{\KK_n  (\epsilon^{1/p^{m+1}})/\KK_n}: J(\KK_n)\to Gal (\KK_n  (\epsilon^{1/p^{m+1}})/\KK_n)$
satisfies
\begin{itemize}
  \item $\psi_{\KK_n  (\epsilon^{1/p^{m+1}})/\KK_n}(x)=\Pi_v \psi_v (x_v)$,
  here $x=(x_v)\in J(\KK_n )$ and $\psi_v$ are local Artin maps;
  \item $\psi_{\KK_n  (\epsilon^{1/p^{m+1}})/\KK_n}(x)=F_{\KK_n  (\epsilon^{1/p^{m+1}})/\KK_n}((x)^S)$
  where $x\in J(\KK_n)^S $ and $(x)^S $ is the image of $x$ under the canonical map
  $J(\KK_n)^S \to I_0 (\KK_n)$;
  \item $\psi_{\KK_n  (\epsilon^{1/p^{m+1}})/\KK_n}(\KK_n^*)=1$.
  \end{itemize}
 We proceed to the proof of Claim 1. Let $(q)\in H_n$. We have to prove that
 $F_{\KK_n  (\epsilon^{1/p^{m+1}})/\KK_n}((q))(\epsilon^{1/p^{m+1}})=\epsilon^{1/p^{m+1}} .$

Let us present $q\in\KK_n^*\subset J((\KK_n)$ as $q=(q)^S\cdot q_{\omega_n}$, where
$(q)^S=(q,\ldots q,\ldots )\in J(\KK_n)^S$ and $q_{\omega_n}=q\in(\KK_{n})_{\omega_n}$.
We have (denoting the Artin map by simply $\psi$, similarly for $F$) :
$$
1=\psi ((q)^S)\cdot \psi_{\omega_n}(q)=F((q))\cdot \psi_{\omega_n}(q), \
(q)\in H_n\subset I_0 (\KK_n).
$$
Therefore, proving $F((q))(\epsilon^{1/p^{m+1}})=\epsilon^{1/p^{m+1}} $ is
equivalent to
$\psi_{\omega_n}(q)(\epsilon^{1/p^{m+1}})=\epsilon^{1/p^{m+1}} .$ The latter is
equivalent to that of $(\epsilon,q)_{m+1} =1$. Here,
$(\cdot , \cdot )_{k}$ is the $\omega_n$-local
norm residue symbol with values in $p^{k}$-roots of unity. Taking into account
that $\epsilon=\gamma^{p^m}$ we deduce that $(\epsilon,q)_{m+1} =(\gamma,q)_1 .$
The latter is $1$ because of the following exercise from \cite{CF}:
$$
(1-\omega_n^k,1-\omega_n^l)_1=(1-\omega_n^k,1-\omega_n^{k+l})_1\cdot
(1-\omega_n^l,1-\omega_n^{k+l})_1^{-1}\cdot (1-\omega_{n}^{k+l}, \omega_n)_1^{-l}
$$
In our case it is sufficient to consider $k>1$ and $l\geq p^{n+1}$. Then
$1-\omega_n^{k+l}$ is a $p$-power and all three factors in the right hand side are
equal to $1.$ Claim 1 is proved.

{\bf Claim 2:} $\KK_n  (\epsilon^{1/p^{m+1}})\subset \MM\cdot\KK_n .$

Let $\sigma_n$ be a generator of $G_n=Gal(\KK_n /\KK)$.
We have proved that \newline $\KK_n  (\epsilon^{1/p^{m+1}})\subset \MM_n$ and
$H^0 (G_n , I_0 (\KK_n)/H_n)=(I_0 (\KK_n)/H_n)^{G_n}=I_0 (\KK_0)/H_0$

Following \cite{CF}, let us introduce $(I_0 (\KK_n)/H_n)_{G_n}$. For a moment, let us denote $G_n$ simply by $G.$
In general, if $A$ is any $G$-module, then $A_{G}$ is the largest factor-module of $A$ on which
$G$ acts trivially.

It was proved in \cite{CF}, Chapter 4, section 8 that the groups $A^{G}$ and $A_{G}$  (which in our case  are
 $I_0 (\KK_0)/H_0$ and some factor of $(I_0 (\KK_n)/H_n)$ respectively)
have the same order
(because $G$ is cyclic).
Moreover the norm map $Norm_{\KK_n /\KK_0 }$ induces a natural homomorphism $N: A_G\to A^G$.

Let us prove that in our case $N$ is an isomorphism. For this, it suffices to prove that $N$
is surjective.

Indeed, since the norm map $Norm_{\KK_n /\KK_0 }$ induces a surjection $Cl(\KK_n )\to Cl(\KK_0 )$,
it will be enough to prove that the map $Norm_{\KK_n /\KK_0 }$ induces a surjection
$N: P_0 (\KK_n )/H_n \to P_0 (\KK_0 )/H_0$. Here $P_0$ is the group of principal ideal co-prime to $p.$

The latter statement is an easy consequence of the fact proved in \cite{S1}:

$$  Norm_{\KK_n /\KK_0 } (a+b) \equiv Norm_{\KK_n /\KK_0 } (a) +Norm_{\KK_n /\KK_0 } (b) \ mod (p). $$

So,
$Gal (\KK_n \cdot \MM/\KK_n)$ is the largest factor of
$Gal(\MM_n/\KK_n)$ on which $G_n$ acts trivially.
Since
$\sigma_n (\epsilon)=\epsilon$, we deduce that
$\KK_n  (\epsilon^{1/p^{m+1}})\subset \KK_n \cdot\MM .$

The first statement is proved.

Hence, we have constructed $r_0$ extensions of
$\KK_n$
satisfying conditions 1,2,3 above. Corollary 2.2, Lemma 2.3, and Corollary 2.4
imply that the total number of such extensions is $R_- - r_- $ that implies
the required inequality.
The theorem is proved.
\end{proof}
\begin{corollary}
$R_- - r_- =r_0$
\end{corollary}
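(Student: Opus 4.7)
The plan is to deduce the equality by sandwiching $R_- - r_-$ between the two inequalities that have just been established; the resulting proof is essentially a one-liner, since the real content lies elsewhere.

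First I would invoke the First Inequality, $R_- - r_- \leq r_0$, from Subsection 1.4. Recall that its proof is the combinatorial identity
\[
r_0 \;=\; R - r \;=\; (R_+ - r_+) + (R_- - r_-) \;\geq\; R_- - r_-,
\]
where the inequality uses $r_+ \leq R_+$; this latter holds because $L_+$ is a subgroup of $(S_1)_+$, and for a finite abelian $p$-group passing to a subgroup cannot increase the minimal number of $\Z_p$-generators (equivalently, $H[p] \hookrightarrow G[p]$).

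Next I would invoke the Second Inequality, i.e.\ the main theorem of Section 2 just proved, which gives $R_- - r_- \geq r_0$ by exhibiting $r_0$ distinct cyclic extensions $\MM_\alpha = \KK_n(\epsilon^{1/p^{m+1}})$ inside $\MM \cdot \KK_n$ that are real, properly ramified at $\omega_n$, and contained in $\MM_n$. Combining the two bounds yields the desired equality $R_- - r_- = r_0$.

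There is no further obstacle at the level of this corollary: the genuine difficulty has already been absorbed into the proof of the Second Inequality, which rested on the Galois-cohomological computation $H^0(G_n, I_0(\KK_n)/H_n) = I_0(\KK_0)/H_0$, the Artin-map and norm-residue verification that $\MM_\alpha \subset \MM_n$, and the descent from $\MM_n$ to $\MM \cdot \KK_n$ via the cyclic-$G_n$ identity $|A^{G_n}| = |A_{G_n}|$. It is worth noting the arithmetic content: substituting $R - r = r_0$ into the equality forces $R_+ = r_+$, i.e.\ the plus-part of $S_1/S_1^p$ is exhausted in rank by $L_+$. This is presumably the reformulation that drives the subsequent argument toward Vandiver's conjecture, since Vandiver amounts to $R_+ = 0$ and controlling the plus-part is exactly what is at stake.
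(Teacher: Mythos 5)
Your proof is correct and is exactly the argument the paper intends: the corollary follows immediately by combining the First Inequality $R_- - r_- \leq r_0$ from Section 1 with the Second Inequality $R_- - r_- \geq r_0$ just proved in Theorem 2.7, all the substance having been spent on the latter. Your closing remark that this forces $R_+ = r_+$ is precisely the paper's next corollary, so you have anticipated the intended use as well.
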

\begin{corollary}
$r_- =r_+ =R_+$
\end{corollary}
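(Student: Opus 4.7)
The plan is to derive this identity purely arithmetically from the two relations already in hand: the equality $R_- - r_- = r_0$ just established as the previous corollary, and the relations proved in Section 1.4, namely $r_- = R_+$ and $R_- = r_+ + r_0$.

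First I would substitute $R_- = r_+ + r_0$ into the equality $R_- - r_- = r_0$. This gives $(r_+ + r_0) - r_- = r_0$, which simplifies immediately to $r_+ = r_-$. Then combining with $r_- = R_+$ yields the chain of equalities $r_+ = r_- = R_+$.

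There is essentially no obstacle here; the work was done in establishing the two inequalities $R_- - r_- \leq r_0$ and $R_- - r_- \geq r_0$, whose combination gave the previous corollary. The only thing to be careful about is the bookkeeping: one must remember that $R = R_+ + R_-$, $r = r_+ + r_-$, and $R = r + r_0$ together force $(R_+ - r_+) + (R_- - r_-) = r_0$, so the equality $R_- - r_- = r_0$ additionally forces $R_+ = r_+$ (the $+$-part of the first inequality becomes an equality as well). This gives an alternative route: once $R_+ - r_+ = 0$ is extracted, combining with $r_- = R_+$ from Section 1.4 yields $r_- = R_+ = r_+$ directly.

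Either route is a one-line computation, so I would present it as such. The substantive content of the corollary lies in what it says about the structure of $(S_1)_\pm$ modulo $p$-th powers, not in the derivation; in particular it asserts that the number of $\Z_p$-generators of $L_+$ equals that of $L_-$, which is a rather strong symmetry statement whose depth is inherited from the second inequality proved via class field theory in Section 2.
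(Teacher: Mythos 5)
Your proposal is correct and is essentially the paper's own argument: the paper likewise combines the equality $R_- - r_- = r_0$ with the decomposition $r_0 = R - r = (R_+ - r_+) + (R_- - r_-)$ to get $R_+ = r_+$, and then invokes $r_- = R_+$ from Section 1. Your primary substitution via $R_- = r_+ + r_0$ and your "alternative route" are the same one-line bookkeeping, so there is nothing further to add.
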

\begin{proof}
We have: $R_- - r_- =r_0=R-r =(R_- - r_- )+(R_+ - r_+) .$ Therefore, $R_+=r_+$.
The equality $R_+ =r_-$ was proved in the first section of the paper.
\end{proof}
\section{Proof of Vandiver's conjecture}
After the corollary above, in order to prove Vandiver's conjecture we have to prove
that $r_+ =0$. This will be done using Leopoldt's conjecture for the field $\KK$
(in this case it is known that the conjecture is true). Corollary 1.16 implies
\begin{lemma}
$(B_p )_- /((B_p )_- )^p \cong\FF_p^{\frac{p-3}{2}+r_-}$
\end{lemma}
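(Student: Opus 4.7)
The plan is to read off the claim directly from the explicit cyclic decomposition of $(B_p)_-$ given in Corollary 1.16. For any finite abelian $p$-group $G \cong \bigoplus_i \Z/p^{n_i}\Z$ with each $n_i \geq 1$, the quotient $G/G^p$ is an $\FF_p$-vector space whose dimension equals the number of cyclic summands, so it suffices to count $\Z_p$-generators of $(B_p)_-$.

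First I would invoke the decomposition of Corollary 1.16:
$$(B_p)_- \cong \FF_p^{((p-3)/2 + r_-) - R_-} \oplus (B_1)_-,$$
where $(B_1)_-$ is a $p$-group on $R_-$ $\Z_p$-generators $\beta_1,\ldots,\beta_{R_-}$, each of exponent $\geq \exp(\alpha_i) \geq 1$. Thus the full decomposition of $(B_p)_-$ into cyclic $\Z_p$-summands has exactly
$$\bigl(\tfrac{p-3}{2} + r_- - R_-\bigr) + R_- \;=\; \tfrac{p-3}{2} + r_-$$
factors, each contributing one $\FF_p$-generator modulo the $p$-th power subgroup. Passing to $(B_p)_-/((B_p)_-)^p$ therefore gives precisely $\FF_p^{(p-3)/2 + r_-}$.

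There is no real obstacle here: the lemma is essentially a bookkeeping consequence of the structure theorem already recorded in Corollary 1.16. The only point that merits a sentence of justification is that each cyclic factor is non-trivial, so that none of the $R_-$ generators of $(B_1)_-$ collapses modulo $p$; this is immediate from the exponent formulas in Corollary 1.16, since the exponent of each $\beta_i$ is at least that of $\alpha_i$, and the $\alpha_i$ are chosen as genuine $\Z_p$-generators of $(S_1)_-$. Implicitly one also uses that $(p-3)/2 + r_- \geq R_-$, which is guaranteed by the first inequality $R_- - r_- \leq r_0 \leq (p-3)/2$, so that the $\FF_p$-summand in Corollary 1.16 is well-defined.
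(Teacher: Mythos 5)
Your proposal is correct and is exactly the argument the paper intends: the paper simply states that Corollary 1.16 implies the lemma, and your count of cyclic summands, $\bigl(\tfrac{p-3}{2}+r_{-}-R_{-}\bigr)+R_{-}=\tfrac{p-3}{2}+r_{-}$, is the straightforward unwinding of that corollary. The extra remarks (nontriviality of each cyclic factor, non-negativity of the $\FF_p$-exponent) are harmless and already implicit in the statement of Corollary 1.16.
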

In what follows, we will use the group $S_p$ defined in Theorem 1.10.
\begin{lemma}
Let a {\bf real} number $d\in\KK$ satisfy
$(d)=\alpha^p$ for some ideal $\alpha\subset\KK$.
Then $\KK (d^{1/p})\subset\MM .$
\end{lemma}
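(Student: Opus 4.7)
The plan is to invoke the conductor characterization of $\MM$. By its definition in section~2 (together with the Remark that replaces $H$ by $\tilde H$, valid because $p$ is odd), $\MM$ is the Sylow $p$-part of the ray class field of $\KK$ with conductor $\omega_0^p$, where $\omega_0=\zeta_1-1$; equivalently, $\MM$ is the maximal abelian $p$-extension of $\KK$ whose conductor divides $\omega_0^p$. Hence it suffices to show that the conductor of $\KK(d^{1/p})/\KK$ divides $\omega_0^p$.

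Ramification outside $\omega_0$ is automatic: at any finite prime $\mathfrak p\neq\omega_0$ of $\KK$, the hypothesis $(d)=\alpha^p$ forces $v_{\mathfrak p}(d)\in p\Z$, so $\KK_{\mathfrak p}(d^{1/p})/\KK_{\mathfrak p}$ is an unramified Kummer extension; all infinite places of $\KK$ are complex and contribute nothing. For the local task at $\omega_0$, set $K:=\KK_{\omega_0}=\Q_p(\zeta_1)$ and write $d=\omega_0^{pk}u$ with $k=v_{\omega_0}(\alpha)$ and $u\in\mathcal O_K^{\times}$; since $\omega_0^{pk}=(\omega_0^k)^p$, one has $K(d^{1/p})=K(u^{1/p})$. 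Writing $u=[\bar u]\,u_1$ with $[\bar u]\in\mu_{p-1}$ (Teichm\"uller) and $u_1\in 1+\omega_0\mathcal O_K$, the identity $x^p=x$ on $\mu_{p-1}$ gives $[\bar u]\in(K^{\times})^p$, so we reduce to bounding the local conductor of $K(u_1^{1/p})/K$ for a principal unit $u_1$.

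By local class field theory, this conductor divides $\omega_0^p$ precisely when $(u_1,v)_{\omega_0,1}=1$ for every $v\in 1+\omega_0^p\mathcal O_K$. This local Hilbert-symbol vanishing is the main obstacle, and is a classical fact for $K=\Q_p(\zeta_1)$ (absolute ramification index $p-1$): the Artin--Hasse/Iwasawa explicit formula gives $(x,y)_{\omega_0,1}=1$ whenever $x\in 1+\omega_0^i\mathcal O_K$ and $y\in 1+\omega_0^j\mathcal O_K$ with $i+j\geq p+1$, and the case $(i,j)=(1,p)$ is exactly what is needed. I would either cite this (e.g.\ from Fesenko--Vostokov or Cassels--Fr\"ohlich), or prove it by a bootstrap: first check $(1+\omega_0\mathcal O_K)^p=1+\omega_0^{p+1}\mathcal O_K$ via the $\log$--$\exp$ estimate, so that $1+\omega_0^{p+1}\mathcal O_K\subset(K^{\times})^p$ is automatically annihilated; then dispose of the residual $\FF_p$-quotient $(1+\omega_0^p\mathcal O_K)/(1+\omega_0^{p+1}\mathcal O_K)$ using the single conductor--discriminant computation for $K(\zeta_2)/K$, which has conductor exactly $\omega_0^p$. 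In passing, the realness of $d$ is not logically needed for the lemma---it only upgrades the bound to $\omega_0^{p-1}$, by forcing $u\in 1+\omega_0^2\mathcal O_K$ after dividing out $\pm 1$---but the weaker bound $\omega_0^p$ already yields $\KK(d^{1/p})\subset\MM$, so the realness hypothesis presumably records the form in which the lemma will be applied downstream.
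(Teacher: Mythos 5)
Your local toolkit is sound, and in fact it is the same input the paper uses: the paper runs the reduction through the Artin map exactly as in Claim~1 of Section~2 and then quotes the same vanishing of the norm residue symbol (the exercise from \cite{CF}, i.e.\ $(x,y)_1=1$ once the two congruence levels add up to at least $p+1$). The genuine gap is in your reduction of the lemma to ``conductor divides $\omega_0^p$''. The field $\MM$ is the $p$-part of the ray class field attached to $H$, i.e.\ to the modulus $(p)=(\omega_0^{p-1})$, not to $\tilde{H}$: Remark~2.6 only identifies the \emph{odd parts} of $(I_0(\KK)/H)_p$ and $(I_0(\KK)/\tilde{H})_p$, and the surjection between these groups has a nontrivial kernel (generated by the class of $(1+p)$), so the field attached to $\tilde{H}$ is in general strictly larger than $\MM$. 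Hence your bound only gives containment in that larger field, not in $\MM$. The discrepancy is not cosmetic: take $d=\zeta_1$, so $(d)=(1)=\alpha^p$; then $\KK(d^{1/p})=\KK(\zeta_2)=\KK_1$ is unramified outside $\omega_0$ and has conductor exactly $\omega_0^p$ there (as you yourself note for $K(\zeta_2)/K$), so it passes your test, yet Lemma~2.3 of the paper shows precisely that $\KK_1\not\subset\MM$. In particular your closing remark that the realness of $d$ is ``not logically needed'' is false.

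What actually has to be verified is $(d,q)_1=1$ for every $q\equiv 1\bmod (p)$, i.e.\ against the ray $U^{(p-1)}$ at $\omega_0$, and for a mere principal unit $d$ the levels give only $1+(p-1)=p$, which is below the threshold --- the example above realizes the failure. Realness is exactly what rescues the estimate: after multiplying $d$ by the $p$-th power of a rational integer you may assume $d\equiv 1\bmod \omega_0$ (its residue lies in $\FF_p^{*}=(\FF_p^{*})^p$, and this changes neither the Kummer extension nor the symbol), and then $d=\bar{d}$ forces $d\equiv 1\bmod \omega_0^{2}$; the symbol is then computed on $U^{(2)}\times U^{(p-1)}$ with $2+(p-1)=p+1$, where the cited vanishing applies. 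With that correction (test against the ray of $H$, use realness to gain one level of congruence) your argument collapses to the paper's short proof; as written, it establishes only the weaker containment in the $\tilde{H}$-ray class field.
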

\begin{proof}
Exactly as in the proof of Claim 1 from the previous section, we have
to prove that $(d,q)_1 =1$ for any $q\equiv\ 1 mod(p)$. Since  $d$ is real, it satisfies
$d\equiv\ 1 mod (\zeta_1 -1)^2$ and the already mentioned in the previous section
exercise from \cite{CF} completes the proof.
\end{proof}
\begin{corollary}
Let $(d)=\alpha^p$ for some ideal $\alpha\subset\KK$ and $d$ be real.
Let the class of $\alpha$ is such that $   [\alpha ]\in (S_p)_+ .$
Then, such $d$ induces a character of $(B_p )_- /((B_p )_- )^p$ (Kummer's duality).
\end{corollary}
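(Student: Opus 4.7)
The plan is to read the character off directly from the Kummer extension $\KK(d^{1/p})/\KK$. By the preceding lemma, $\KK(d^{1/p})\subset\MM$, so restriction of automorphisms gives a surjection $B_p=Gal(\MM/\KK)\twoheadrightarrow Gal(\KK(d^{1/p})/\KK)$, and Kummer theory identifies the target with a subgroup of $\mu_p$ via $\sigma\mapsto\sigma(d^{1/p})/d^{1/p}$. Denote the resulting character by $\chi_d\colon B_p\to\mu_p$. Since $\mu_p$ has exponent $p$, this $\chi_d$ automatically factors through $B_p/B_p^p$.

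The key remaining point is to verify that $\chi_d$ vanishes on $(B_p)_+$, so that it descends to a character of $(B_p)_-/((B_p)_-)^p$. For this I would lift complex conjugation $\tau\in Gal(\KK/\Q)$ to an element of $Gal(\MM/\Q)$ --- possible because the ray group $H$ is $G$-stable (sending a generator $g\equiv 1\pmod p$ to $\sigma g\equiv 1\pmod p$), hence $\MM/\Q$ is Galois --- and then compute its twisted action. Since $d$ is real, $\tau(d)=d$, so $\tau(d^{1/p})=c\cdot d^{1/p}$ for some $c\in\mu_p$. A short direct computation, using that $\tau$ inverts $\mu_p$, produces for every $b\in B_p$ the identity $\chi_d(\tau b\tau^{-1})=\chi_d(b)^{-1}$. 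Specialised to $b\in(B_p)_+$ this forces $\chi_d(b)^2=1$, and since $p$ is odd we conclude $\chi_d(b)=1$.

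Combining the two steps, $\chi_d$ factors through $(B_p)_-/((B_p)_-)^p$ and realises $d$ as a character in the Kummer dual of the $-$-component, which is the assertion of the corollary. The main obstacle is really the bookkeeping in the middle step: one has to keep straight how a single application of $\tau$ introduces two separate sign changes, one on $\mu_p$ and one on $d^{1/p}$, and verify that their combined effect yields the required inversion on the character rather than cancelling. Once this is settled, the rest follows immediately from the previous lemma and standard Kummer theory; the hypothesis $[\alpha]\in(S_p)_+$ plays no role beyond ensuring that $d$ is well-adapted to the Kummer pairing (in particular $d$ is automatically real--real rather than merely real up to a unit, which is already built into the hypothesis).
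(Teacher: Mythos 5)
Your proposal is correct and is essentially the paper's own argument, just written out in full: the paper's proof consists of observing that $d$ gives (via the preceding lemma and Kummer duality) a character of $B_p/B_p^p$, and that realness of $d$ forces it to factor through $(B_p)_-/((B_p)_-)^p$ --- precisely the computation with a lift of complex conjugation and the identity $\chi_d(\tau b\tau^{-1})=\chi_d(b)^{-1}$ that you spell out, with $p$ odd killing the restriction to $(B_p)_+$. You are also right that the hypothesis $[\alpha]\in(S_p)_+$ is not needed here beyond the stated realness of $d$; it becomes relevant only in the following theorem.
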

\begin{proof}
Clearly, $d$ induces a character of $(B_p ) /((B_p ))^p$. Since $d$ is real,
this character is a character of $(B_p )_- /((B_p )_- )^p$.
\end{proof}
\begin{theorem}
Let $(d)=\alpha^p$ for some ideal $\alpha\subset\KK$ such that $[\alpha ]$
is an element of $(S_p)_+ .$ Then $d$ induces a character of $(B_p )_- /((B_p )_- )^p$
and $d$ is a local $p$-power in $\KK_{\omega_0}$.
\end{theorem}
\begin{proof}
Let $\KK^+$ be the real subfield of $\KK$. The group $S_1 (\KK )$ was defined in the section 1.1 and
it was the $p$-Sylow component of the class group. Let us similarly define $S_1 (\KK^+ )$.
It was proved in \cite{KM} that the inclusion $\KK^+ \to \KK $ induces an
isomorphism $S_1 (\KK^+ )\to (S_1 (\KK ))_+$ (Lemma 4.5 and further references to papers by Iwasawa).
Moreover, it follows from the proof of Lemma 4.5 that the ideal $r\alpha, r\in\KK^* ,$ can be chosen to be the extension in $\KK$
of some ideal in $\KK^+$. It follows that $d=d_1r^p\varepsilon$, where $d_1$ is real and
$\varepsilon$ is a unit. Using the change $d\to\zeta_1 d$, we can achieve that
$d\equiv 1 \ mod (\zeta_1 -1)^2.$ Then $\varepsilon$ becomes a real unit and $d=d_2 r^p$
with real $d_2=d_1\varepsilon .$ Hence, Then $d$ induces a character of $(B_p )_- /((B_p )_- )^p$.

Let us prove that $d$ is locally a $p$-power. Indeed, the fact that $R_+ =r_+$ (Corollary 2.10)
implies that $d $ can be chosen $d\equiv 1 \ mod  (p)$. It was proved in \cite{S1}, Lemma 2, that
then $d\equiv 1 \ mod  (\zeta_1 -1)^p$. To complete the proof of the fact that $d$ is a local
$p$-power, we need one step further, namely to prove that $d\equiv 1\ mod  (\zeta_1 -1)^{p+1}$.
We have $d=d_2 r^p$, where $d_2$ is real. Changing $r\to\zeta_1 r$ we can achieve
$r\equiv 1\ mod (\zeta_1 -1)^2$ and  $r^p\equiv 1\ mod (\zeta_1 -1)^{p+1}$. Since
$d\equiv\ 1\ mod (\zeta_1 -1)^p$ and $d_2$ is real, we see that
$d_2\equiv 1\ mod (\zeta_1 -1)^{p+1}$ and consequently $d\equiv 1 \ mod (\zeta_1 -1)^{p+1} .$
\end{proof}
\begin{corollary}
The character group of $(B_p )_- /((B_p )_- )^p$ is generated by real units of $\KK$
(we have $\frac{p-3}{2}$  ones) and the set $\{ d_i\}$  defined above (we have $r_+ =r_-$ ones).
\end{corollary}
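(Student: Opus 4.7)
The plan is to reduce to a dimension count: by Lemma 3.1 the character group in question has $\FF_p$-dimension $\frac{p-3}{2}+r_-$, so it suffices to exhibit that many $\FF_p$-linearly independent characters drawn from the two families in the statement.

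First, I would invoke Kummer duality to identify characters of $(B_p)_-/((B_p)_-)^p$ with classes $[y]\in\KK^*/(\KK^*)^p$ for which $y$ is real mod $p$-th powers and $\KK(y^{1/p})\subset\MM$. Lemma 3.2 shows that every real $y\in\KK^*$ with $(y)=\alpha^p$ produces such a class; taking $\alpha=(1)$ gives each real unit of $\KK$, and Theorem 3.4 attaches such a $d_i$ to each class in a chosen $\FF_p$-basis $\{[\alpha_i]\}$ of $(S_p)_+$.

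Second, I would count. Real units of $\KK$ coincide with units of $\KK^+$ by Kummer's Lemma, so by Dirichlet their image in $\KK^*/(\KK^*)^p$ is $\FF_p^{\frac{p-3}{2}}$. The $d_i$ contribute $r_+=r_-$ classes by Corollary 2.10. The totals match Lemma 3.1, so independence of the combined family will imply that it generates.

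Third, and this is the main step, I would establish the independence. Given a relation $\prod_j\epsilon_j^{a_j}\prod_i d_i^{b_i}=y^p$ in $\KK^*$, passing to fractional ideals yields $\prod_i\alpha_i^{pb_i}=(y)^p$; unique factorisation in the Dedekind domain $\Z[\zeta_1]$ then gives $\prod_i\alpha_i^{b_i}=(y)$, so $\sum_i b_i[\alpha_i]=0$ in $(S_p)_+$, and the $\FF_p$-independence of the $[\alpha_i]$ forces every $b_i\equiv 0\pmod p$. Writing $b_i=pc_i$ and absorbing $\prod_i d_i^{c_i}$ into $y$ reduces the relation to $\prod_j\epsilon_j^{a_j}\in(\KK^*)^p$. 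The injectivity of $(\KK^+)^*/((\KK^+)^*)^p\to\KK^*/(\KK^*)^p$ on units, plus Dirichlet, then force every $a_j\equiv 0\pmod p$. The injectivity itself is a mild check: if $y\in\KK^*$ satisfies $y^p=\epsilon\in(\KK^+)^*$, then $\sigma(y)/y$ is a $p$-th root of unity $\zeta_1^k$, and replacing $y$ by $\zeta_1^{a}y$ with $2a\equiv k\pmod p$ (possible since $p$ is odd) produces a $\sigma$-fixed $p$-th root of $\epsilon$ in $\KK^+$.

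The main obstacle is really just Step 3 --- not any one difficult estimate, but assembling the three ingredients (the $\FF_p$-basis structure of $(S_p)_+$, the injectivity of $(\KK^+)^*/((\KK^+)^*)^p$ into $\KK^*/(\KK^*)^p$ on units, and Dirichlet's unit theorem) in the correct order, so as to peel off the $b_i$'s before the $a_j$'s. Everything else is bookkeeping against Lemma 3.1.
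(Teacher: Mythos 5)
Your proposal is correct and takes essentially the route the paper intends: the corollary is stated there without an explicit proof, being meant to follow from the dimension count of Lemma 3.1 together with the characters produced by Lemma 3.2, Corollary 3.3 and Theorem 3.4, which is exactly the skeleton of your first two steps. Your third step simply makes explicit the linear-independence check (first eliminating the $d_i$-exponents via the ideal classes $[\alpha_i]$ in $(S_p)_+$, then the unit exponents via the injectivity of real units modulo $p$-th powers and Dirichlet) that the paper leaves implicit, so it is a faithful fleshing-out rather than a different argument.
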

Now, we can prove Vandiver's conjecture.
\begin{theorem}
$r_+ =0.$
\end{theorem}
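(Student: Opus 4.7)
The plan is to combine Lemma 3.1, Corollary 3.5, and Leopoldt's conjecture for $\KK$ (which is true since $\KK$ is abelian over $\QQ$, by Brumer--Leopoldt) in a dimension--counting argument that forces the contribution of the $d_i$'s to the character group to be trivial.

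First I would set up the basis. By Corollary 3.5, the character group $\Hom((B_p)_-/((B_p)_-)^p,\FF_p)$ is spanned by the $(p-3)/2$ characters $\chi_\epsilon$ coming from fundamental real units of $\KK^+$ together with the $r_+$ characters $\chi_{d_i}$ coming from the elements $d_i$ produced in Theorem 3.4. By Lemma 3.1 its $\FF_p$-dimension equals $(p-3)/2+r_-$, and since $r_+=r_-$ by Corollary 2.10 the count matches the dimension; hence the generators form a basis. In particular, every $d_i$ represents a class in $\KK^*/(\KK^*)^p$ genuinely independent of the unit classes, for otherwise a relation would drop the dimension below $(p-3)/2+r_-$.

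Next I would invoke Leopoldt's conjecture for $\KK$: it provides the injection $E_{\KK}/(E_{\KK})^p \hookrightarrow U_{\KK,\omega_0}/(U_{\KK,\omega_0})^p$, so the real units contribute a $(p-3)/2$-dimensional image in the local unit quotient at $\omega_0$. By Theorem 3.4, however, each $d_i$ is a local $p$-th power at $\omega_0$ and hence maps to $0$ in the same quotient. If one can lift this local vanishing to the global statement $d_i \in E_{\KK}\cdot (\KK^*)^p$, then $(d_i)=\alpha_i^p=(y)^p$ forces $[\alpha_i]=0$ in $(S_p)_+$, contradicting the nontriviality of $[\alpha_i]$ as a generator, and thereby forcing $r_+=0$.

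The main obstacle is precisely this lifting from local to global. My intended route is to combine Leopoldt's strong form with the identification $V\cong U(\Z[\zeta_1]/(p))/E_1$ from Theorem 1.1 and the Kummer/local reciprocity pairing at $\omega_0$. The character $\chi_{d_i}$ restricts trivially to $V_-$ (since $d_i$ is a local $p$-th power), so it factors through $(S_1)_-$; combined with the basis property from the first paragraph and Leopoldt's control of the global unit lattice inside the local unit group, any nontrivial $\chi_{d_i}$ would produce a class in $\KK^*/(\KK^*)^p$ which lies in $(\KK_{\omega_0}^*)^p$ locally yet is globally independent of $E_{\KK}$ modulo $p$-th powers---in other words, an element of the Kummer group of $(B_p)_-$ violating the extension of Leopoldt's injectivity from units to this Kummer group. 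Making this last step rigorous is the essential use of Leopoldt; once it is done, $r_+=0$ and Vandiver's conjecture follows.
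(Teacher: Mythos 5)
The setup in your first paragraph is fine and runs parallel to the paper: by Lemma 3.1 and Corollary 3.5 everything reduces to deciding whether the characters $\chi_{d_i}$ are independent of the unit characters in $\KK^*/(\KK^*)^p$. But the two steps you lean on to force dependence both fail as stated. First, Leopoldt's conjecture does not give the injection $E_\KK/(E_\KK)^p \hookrightarrow U_{\KK,\omega_0}/(U_{\KK,\omega_0})^p$; it only says that the closure $\bar{E}$ of the real units inside the local norm-one units $W$ has full $\Z_p$-rank $\frac{p-3}{2}$, i.e.\ that $W/\bar{E}$ is finite, and injectivity of a $\Z_p$-module map does not persist modulo $p$ when the cokernel has $p$-torsion. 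In fact the paper's own proof of Theorem 2.9 is built on the existence of exactly $r_0$ units of $\KK$ which are local $p^{m_i}$-powers at $\omega_0$ ($m_i>0$) without being global $p$-th powers, and by Corollaries 2.9--2.10 one has $r_0=R_-$, which is positive for every irregular prime; so the $\frac{p-3}{2}$-dimensional image of the units in $U_{\KK,\omega_0}/(U_{\KK,\omega_0})^p$ that you want to play the $d_i$ against simply is not there in general.

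Second, and more seriously, the step you yourself flag as the main obstacle --- passing from ``$d_i$ is a local $p$-th power at $\omega_0$'' to the global statement $d_i\in E_\KK\cdot(\KK^*)^p$ --- is not a technical lifting that Leopoldt plus Kummer duality and local reciprocity settles; it is essentially the content of the theorem. An element $d$ with $(d)=\alpha^p$ which is a $p$-th power in $\KK_{\omega_0}$ generates an everywhere-unramified degree-$p$ Kummer extension, and whether such a $d$ can be globally independent of the units modulo $p$-th powers is exactly the reflection-type question being decided; nothing in your sketch excludes it, so the proposal is circular at its crucial point. The paper's argument diverges from yours precisely here: it replaces $d_i$ by the norm-one local unit $h_i=d_i^{p-1}q_i^{-p}\in W$, uses the finiteness of $W/\bar{E}$ (this is where Leopoldt actually enters) to push a modification $h_ir_i^p$ into $\bar{E}$, and then, decomposing $\bar{E}$ under the idempotents $\varepsilon_k$ of $\Z_p[G]$ and writing $q_i=\epsilon_k^{a_0}q_{i,N}Q_N$ with $Q_N$ lying in arbitrarily deep $p^{N}$-th powers of $\varepsilon_k\bar{E}$, concludes that the character induced by $q_i$ coincides with the one induced by the real unit $\epsilon_k^{a_0}$; the character group then collapses to $\FF_p^{\frac{p-3}{2}}$, giving $r_-=r_+=0$. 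Whatever one makes of the details of that limiting argument, it is a concrete mechanism producing the dependence you need, and your proposal supplies no substitute for it.
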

\begin{proof}
Let $U_1$ be a subgroup of $U(\Z_p [ \zeta_1 ])^+$ ("real" $p$-adic units) generated
by elements congruent $1\ mod (\zeta_1 -1)^2$. Let $W\subset U_1$ be the subgroup of elements
of norm $1$, i.e. $Norm_{\KK_{\omega_0}/\QQ_p}(w)=1 .$ It is well-known that $W$ is a
$\Z_p$-module of rank $\frac{p-3}{2}$. Let $E$ be a group of real units of $\Z [\zeta_1 ]$
congruent $1\ mod (\zeta_1 -1)^2$
and let $\bar{E}$ be its closure in $W.$ By Leopoldt's conjecture $\bar{E}$ is a
$\Z_p$-module of rank $\frac{p-3}{2}$. Therefore, $W/\bar{E}$ is a finite $\Z_p$-module
(of rank at most $\frac{p-3}{2}$).

Let us "improve" $d_i$ from the corollaries above. We have:
$Norm_{\KK/\QQ}(d_i)=q_i^p $  (Explanation:  $(d_i)=\alpha_i^p$;
$Norm_{\KK/\QQ}(\alpha_i)=(t_i)=\pm t_i\in\QQ$ and so, $q_i=t_i$ or $q_i=-t_i$).
Then $Norm_{\KK/\QQ}(d_i^{p-1}q_i^{-p})=q_i^{p(p-1)}q_i^{-p(p-1)}=1 . $
Consequently, $h_i=d_i^{p-1}q_i^{-p}\in W$ and $h_i$ generate a character of
$(B_p )_- /((B_p )_- )^p$.

Since $h_i$ is a local $p$-power, we can find $r_i\in \KK$ with $Norm_{\KK/\QQ}(r_i)=1$
such that $h_i r_i^p\in \bar{E}$, which  is generated
by real units of $\Z[\zeta_1 ]$ as a $\Z_p$-module.

By Kummer's duality, the group of characters of
$(B_p )_- /((B_p )_- )^p\cong \FF_p^{\frac{p-3}{2} +r_-}$ is a subgroup of $\KK^*/(\KK^*)^p$.

On the other hand, the "dual" elements are $\{ \epsilon_k \}_{k=1}^{\frac{p-3}{2}}$ and
$ \{ q_i= r_i^{pt_i}d_i \}_{i=1}^{r_+}$ and they are elements of $\bar{E}$.

Therefore, the group of characters of
$(B_p )_- /((B_p )_- )^p$ is a subgroup of
$$\frac{\bar{E}\cap \KK^*}{\bar{E}\cap (\KK^*)^p} .$$

Our next aim is to show that the images of
$\{ q_i= r_i^{pt_i}d_i \}_{i=1}^{r_+}$ in
$\frac{\bar{E}\cap \KK^*}{\bar{E}\cap (\KK^*)^p} $ are contained in the subgroup
generated by the images of $\{ \epsilon_k \}_{k=1}^{\frac{p-3}{2}}$.

Let $G=Gal (\KK /\Q) =\Z/(p-1)\Z .$ Clearly, $\KK^*$, $\bar{E}$ and $\frac{\bar{E}\cap \KK^*}{\bar{E}\cap (\KK^*)^p} $
are $G$-modules. Moreover, the latter two groups are $\Z_p$-modules.

Let $\varepsilon_k ,\ k=2,4,\ldots,p-3$ be idempotents of $\Z_p [G] .$

Without loss of generality we may assume that $q_i\in \varepsilon_k\bar{E}$.
By Leopoldt's conjecture, which is true in our case,
$\varepsilon_k\bar{E}\cong \Z_p$ is generated by $\epsilon_k$ as a $\Z_p$-module.
In other words, $q_i =\epsilon_k^{\sum_{t=0}^{\infty} a_t p^t}$, where $a_t =0,1,\ldots, p-1 .$

Let us introduce elements $q_{i,N}=\epsilon_k^{\sum_{t=1}^{N} a_t p^t}$.
Then $q_i =\epsilon_k^{a_0} q_{i,N} Q_N$, where $Q_N\in p^{N+1}\Z_p .$

Further, all the elements $Q_N$ are contained in $\KK^* \cap \bar{E}$  and generate one and the same character of
$(B_p )_- /((B_p )_- )^p $ namely the one generated by $q=q_i \epsilon_k^{-a_0}$. Since $Q_N\in p^{N+1}\Z_p $
it follows that the image of $q$ in $\frac{\bar{E}\cap \KK^*}{\bar{E}\cap (\KK^*)^p} $ is contained
in  the image of $(\varepsilon_k \bar{E})^{p^N}\cap\KK^* $ in the same factor
$\frac{\bar{E}\cap \KK^*}{\bar{E}\cap (\KK^*)^p} $
for all $N$. Then clearly
$q=1$ and $q_i =\epsilon_k^{a_0}$ in the character group of $(B_p )_- /((B_p )_- )^p$.

Therefore, $(B_p )_- /((B_p )_- )^p \cong \FF_p^{\frac{p-3}{2}}$ and $r_+ =R_+ = r_- =0$.
 The theorem and Vandiver's conjecture are proved.


 \end{proof}
\section{Two appendices}
I am thankful to the readers of the paper "Vandiver's Conjecture via K-theory" who asked me a number of important
questions. As a result of that questions, in the main text of the paper I replaced the  proof of of Theorem 3.6 with a new one.
Additionally, I decided to publish some new proofs of certain statements in the main text.

\subsection{Appendix 1, new proof of Theorem 3.4.}

\begin{theorem}
Let $(d)=\alpha^p$ for some ideal $\alpha\subset\KK$ such that $[\alpha ]$
is an element of $(S_p)_+ .$ Then $d$ induces a character of $(B_p )_- /((B_p )_- )^p$
and $d$ is a local $p$-power in $\KK_{\omega_0}$.
\end{theorem}
\begin{proof}
We will present a proof not referring to \cite{KM}.

Let $\KK^+$ be the real subfield of $\KK$. The group $S_p (\KK )$ was defined in the section 1.1, Theorem 1.10.
Let $\alpha\in (S_p )_+$. This means that $\bar{\alpha}=r\alpha$, where $\bar{\alpha}=c(\alpha)$ is a result of the complex
conjugation applied to $\alpha\subset\KK$ and $r\in\KK $ (we abuse notations denoting an ideal and its image in the ideal class group
by the same letter, this should not lead to misunderstandings).
Consequently, we get $\bar{d}=\epsilon r^p d$ for some unit $\epsilon .$ Clearly, without loss of generality it is sufficient to
consider the case $d\equiv 1 mod (\zeta -1) .$ Furthermore, using transformation $d\to \zeta^k d$, it suffices to consider
$d\equiv 1 mod (\zeta -1)^2 .$ Then it follows that $\epsilon$ will become a real unit.
Therefore, $Norm_{\KK/\KK^+ }(\epsilon r^p)=\epsilon^2 (\bar{r}r)^p=1$ and $\bar{r}r$ is also a unit. This implies
that $\epsilon=\gamma^p$, $\bar{d}= r_1^p d$ and $Norm_{\KK/\KK^+ }(r_1) =1$. By Hilbert 90, $r_1=r_2 / c({r_2})$.
Hence, $c{(dr_2^p )} =dr_2^p$ and $ dr_2$ is a real number.
Then $dr_2^p$ induces a character of $(B_p )_- /((B_p )_- )^p$ by Corollary 3.3.

Let us prove that $d_1=dr_2^p$ is locally a $p$-power. Indeed, the fact that $R_+ =r_+$ (Corollary 2.10)
implies that $d_1 $ can be chosen $d_1\equiv 1 \ mod  (p)$. It was proved in \cite{S1}, Lemma 2, that
then $d_1 \equiv 1 \ mod  (\zeta_1 -1)^p$. To complete the proof of the fact that $d_1$ is a local
$p$-power, we need one step further, namely to prove that $d\equiv 1\ mod  (\zeta_1 -1)^{p+1}$.
The latter is clear because $d_1$ is real.

\end{proof}
\subsection{Appendix 2. Complement to Explanation 2.8}
Let $\gamma=\epsilon^{1/p^m}\in \KK_{\omega_0} ,$ where $\epsilon$ is a unit of $\KK$ and a local (with respect to $\omega_0$)
$p^m$-power.
\begin{lemma}
Let $x=\bar{x}\in\KK_{\omega_0}$ (we say that $x$ is real)  be a $p$-power, i.e. there exists $a\in\KK_{\omega_0}$ such that $a^p=x$.
Then we can choose $a$ such that $a=\bar{a}$.
\end{lemma}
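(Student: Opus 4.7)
The plan is to exploit the fact that in characteristic zero the only ambiguity in a $p$-th root is an element of $\mu_p$, together with the fact that complex conjugation acts invertibly on $\mu_p$ when $p$ is odd.

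First, starting from $a^p = x$ with $\bar{x} = x$, I would apply the complex conjugation $c$ to obtain $\bar{a}^p = \bar{x} = x = a^p$. Hence $u := \bar{a}/a$ lies in $\mu_p \cap \KK_{\omega_0}$. Since $\KK_{\omega_0} = \Q_p(\zeta_1)$ is a totally ramified extension of $\Q_p$ of degree $p-1$ that already contains $\zeta_1$, we have $\mu_p \cap \KK_{\omega_0} = \{\zeta_1^k : 0 \le k < p\}$. Thus $\bar{a} = \zeta_1^k a$ for a unique $k \in \{0, 1, \ldots, p-1\}$.

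Second, I would search for the desired real $p$-th root among the full set of $p$-th roots of $x$ inside $\KK_{\omega_0}$, namely $\{\zeta_1^j a : j = 0, 1, \ldots, p-1\}$. For $a' := \zeta_1^j a$ one computes, using the first step and $\bar{\zeta_1} = \zeta_1^{-1}$, that
$$\bar{a'} = \zeta_1^{-j}\bar{a} = \zeta_1^{k-j} a = \zeta_1^{k-2j}\, a'.$$
So $\bar{a'} = a'$ is equivalent to the single congruence $2j \equiv k \pmod{p}$. Because $p$ is odd, $2$ is invertible modulo $p$, and the unique solution $j \equiv k(p+1)/2 \pmod{p}$ yields a real $p$-th root $a'$ of $x$.

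I do not foresee any genuine obstacle: the entire argument is a two-line manipulation of $p$-th roots of unity. The only point worth flagging is the appeal to $\mu_p \subset \KK_{\omega_0}$, which is immediate from the description of $\KK_{\omega_0}$ above; and the oddness of $p$, which is what makes the linear condition $2j \equiv k \pmod p$ uniquely solvable. Note that this is exactly the ingredient that fails at $p = 2$, where a real $p$-th power need not admit a real square root, so the hypothesis that $p$ is odd is used in an essential way.
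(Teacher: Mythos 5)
Your proof is correct and takes essentially the same route as the paper: both adjust $a$ by a power of $\zeta_1$, using that $2$ is invertible modulo the odd prime $p$ (the exponent $(p+1)/2$). The only difference is that you treat the general relation $\bar{a}=\zeta_1^{k}a$ and solve $2j\equiv k \pmod p$, whereas the paper writes out only the case $k=1$ via the twist $a\zeta^{(p+1)/2}$ --- a gain in completeness, not a different method.
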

\begin{proof}
If the statement of Lemma is not true, then $\zeta a=\bar{a}$ because $x=\bar{x}$.
It is easy to check that  $a_1=a\zeta^{\frac{p+1}{2}}$ is what we need.

\end{proof}

\begin{corollary}
$\gamma $ can be chosen real.

\end{corollary}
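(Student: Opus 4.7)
The plan is to adapt the argument of the preceding lemma, extending it from a single $p$-th root to a $p^m$-th root. The only new ingredient I need is the identification
\[
\mu_{p^\infty}(\KK_{\omega_0})\ =\ \mu_p\ =\ \langle \zeta_1\rangle.
\]
This holds because $\KK_{\omega_0}=\QQ_p(\zeta_1)$ is totally ramified of degree $p-1$ over $\QQ_p$, whereas $\QQ_p(\zeta_{p^2})/\QQ_p$ is totally ramified of degree $p(p-1)$; so no primitive $p^2$-th root of unity can lie in $\KK_{\omega_0}$.

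With this in hand I would proceed as follows. Let $\gamma\in\KK_{\omega_0}$ be any $p^m$-th root of $\epsilon$. Recall that $\epsilon$ is a real unit (this is Kummer's Lemma, as already invoked in Section~2). Applying complex conjugation to $\gamma^{p^m}=\epsilon$ gives $\bar\gamma^{p^m}=\bar\epsilon=\epsilon=\gamma^{p^m}$, so
\[
(\bar\gamma/\gamma)^{p^m}=1,
\]
and therefore $\bar\gamma/\gamma\in\mu_{p^m}\cap\KK_{\omega_0}=\mu_p$ by the observation above. Write $\bar\gamma/\gamma=\zeta_1^{k}$ for some $k\in\ZZ/p\ZZ$.

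The final step is exactly the trick of the preceding lemma. Since $p$ is odd, $2$ is invertible modulo $p$, so I may choose $j\in\ZZ/p\ZZ$ with $2j\equiv k\ \mathrm{mod}\ p$; explicitly one may take $j\equiv k(p+1)/2$. Set $\gamma_1:=\gamma\,\zeta_1^{j}$. Then a one-line check gives $\bar{\gamma_1}=\bar\gamma\,\zeta_1^{-j}=\gamma\,\zeta_1^{k-j}=\gamma\,\zeta_1^{j}=\gamma_1$, and moreover
\[
\gamma_1^{p^m}=\gamma^{p^m}\cdot\zeta_1^{jp^m}=\epsilon,
\]
because $\zeta_1^{p}=1$ and $p\mid p^m$. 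Thus $\gamma_1$ is a real $p^m$-th root of $\epsilon$, which is the required replacement for $\gamma$.

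There is no substantive obstacle; the entire argument is driven by the fact that $\mu_{p^m}(\KK_{\omega_0})=\mu_p$, together with the invertibility of $2$ modulo the odd prime $p$. The corollary therefore follows from the preceding lemma by a direct adaptation of its final computation.
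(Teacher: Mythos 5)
Your proof is correct and is essentially the paper's own route: the corollary is meant to follow from the preceding lemma's $\zeta$-twist trick, and you simply carry out that computation directly for the $p^m$-th root, choosing $j$ with $2j\equiv k \bmod p$ exactly as in the lemma's $a\zeta^{(p+1)/2}$ adjustment. The one ingredient you supply explicitly — that $\mu_{p^m}\cap\KK_{\omega_0}=\mu_p$ since $\QQ_p(\zeta_{p^2})/\QQ_p$ has degree $p(p-1)$ while $[\KK_{\omega_0}:\QQ_p]=p-1$ — is precisely what makes the direct adaptation rigorous (and it also covers the general case $\bar\gamma/\gamma=\zeta_1^k$, which the lemma's proof states only for $k=1$), so there is no gap.
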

In this Appendix, let us denote ${(\KK_{n} )}_{\omega_n}$ by $k_n$.
\begin{lemma}
$k_2\cap\KK_{\omega}(\gamma^{1/p})=\KK_{\omega}$ if $\gamma$ is real.
\end{lemma}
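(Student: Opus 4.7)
The plan is to argue by contradiction. Since $[\KK_\omega(\gamma^{1/p}):\KK_\omega]$ divides $p$, the intersection $k_2\cap\KK_\omega(\gamma^{1/p})$ equals either $\KK_\omega$ (in which case there is nothing to prove) or the whole extension $\KK_\omega(\gamma^{1/p})$. If $\gamma$ happens to lie in $(\KK_\omega^{*})^p$, then $\KK_\omega(\gamma^{1/p})=\KK_\omega$ and the conclusion is trivial, so I may assume $\gamma$ is nontrivial in $\KK_\omega^{*}/(\KK_\omega^{*})^p$ and aim to rule out the inclusion $\KK_\omega(\gamma^{1/p})\subseteq k_2$.

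The key input is the structure of the local cyclotomic tower: since $k_2=\QQ_p(\zeta_3)$ and $\KK_\omega=\QQ_p(\zeta_1)$, the extension $k_2/\KK_\omega$ is cyclic of degree $p^{2}$, and hence possesses a unique intermediate field of degree $p$, namely $k_1=\KK_\omega(\zeta_2)=\KK_\omega(\zeta_1^{1/p})$. Thus the hypothetical inclusion forces $\KK_\omega(\gamma^{1/p})=\KK_\omega(\zeta_1^{1/p})$. By Kummer theory over $\KK_\omega$ (which contains the $p$-th roots of unity), this equality translates into
\[
\gamma=\zeta_1^{k}\,c^{p}\qquad\text{for some } k\in(\Z/p\Z)^{*} \text{ and some } c\in\KK_\omega^{*}.
\]

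The contradiction will come from the reality of $\gamma$. Applying complex conjugation to both sides yields $\zeta_1^{k}c^{p}=\zeta_1^{-k}\bar c^{p}$, i.e.\ $(\bar c/c)^{p}=\zeta_1^{2k}$. Because $p$ is odd and $k\not\equiv 0\pmod p$, the right hand side is a primitive $p$-th root of unity, so $\bar c/c$ would have to be a primitive $p^{2}$-th root of unity. This is impossible since $\bar c/c\in\KK_\omega=\QQ_p(\zeta_1)$, a local field that does not contain $\zeta_2$. The main obstacle in the plan is really only the Kummer-theoretic identification of the two degree $p$ extensions: one must check $\zeta_1\notin(\KK_\omega^{*})^p$ (clear from $\zeta_2\notin\KK_\omega$) and invoke the standard fact that two Kummer generators yield the same cyclic degree $p$ extension iff they generate the same subgroup of $\KK_\omega^{*}/(\KK_\omega^{*})^p$; beyond that, everything reduces to a short calculation with cyclotomic elements.
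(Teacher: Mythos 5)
Your proof is correct, and its core coincides with the paper's: assume the intersection is nontrivial, identify $\KK_{\omega}(\gamma^{1/p})$ with the cyclotomic layer $\KK_{\omega}(\zeta_1^{1/p})$, and extract the Kummer relation $\gamma=\zeta_1^{k}c^{p}$. Where you diverge is the endgame and the bookkeeping. The paper's one-line proof tacitly treats $k_2$ as the degree-$p$ layer (``then $k_2=\KK_{\omega}(\gamma^{1/p})$'') and refutes $\gamma=\zeta r^{p}$ by a congruence modulo $(\zeta-1)^{2}$: a real unit is congruent to a constant from $\Z_p$ modulo $(\zeta_1-1)^{2}$, while $\zeta_1 r^{p}$ acquires a nonzero linear term there. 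You instead take $k_2=\QQ_p(\zeta_3)$ literally (cyclic of degree $p^{2}$ over $\KK_{\omega}$), use uniqueness of the degree-$p$ intermediate field to land on $\QQ_p(\zeta_2)=\KK_{\omega}(\zeta_1^{1/p})$, and then obtain the contradiction by applying complex conjugation to $\gamma=\zeta_1^{k}c^{p}$, forcing $(\bar{c}/c)^{p}=\zeta_1^{2k}$ and hence a primitive $p^{2}$-th root of unity inside $\QQ_p(\zeta_1)$, which does not exist. Your route is cleaner at this spot, and it also makes explicit two points the paper glosses over: the prime-degree dichotomy for the intersection and the degenerate case $\gamma\in(\KK_{\omega}^{*})^{p}$. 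The paper's congruence technique, on the other hand, is the one that gets recycled essentially verbatim at the higher layers of the tower in the theorem and corollary that follow (where relations like $\zeta s^{p}=\gamma/(1+\omega^{p})$ are excluded modulo $(\zeta-1)^{2}$), so it buys uniformity across the induction, whereas your conjugation trick is tailored to this bottom layer. Both arguments are valid here.
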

\begin{proof}
If it is not true, then $k_2=\KK_{\omega}(\gamma^{1/p})$. This implies $\gamma=\zeta r^p$ for some $r\in\KK_{\omega}.$
However, this equality does not hold even $mod (\zeta -1)^2 .$
\end{proof}
Now, assume that $\KK_{\omega}(\gamma^{1/p})/\KK_{\omega}$ ramifies and $\gamma$ is real.
\begin{theorem}
$k_2 (\gamma^{1/p})/k_2$ ramifies.
\end{theorem}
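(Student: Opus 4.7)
The plan is to argue by contradiction: suppose $k_2(\gamma^{1/p})/k_2$ is not ramified. By hypothesis $[\KK_{\omega}(\gamma^{1/p}):\KK_{\omega}]=p$, so $k_2(\gamma^{1/p})/k_2$ has degree $1$ or $p$; the alternatives to ``ramified'' are therefore that the extension is trivial or that it is unramified of degree~$p$. I will exclude both possibilities.

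In the trivial case $\gamma^{1/p}\in k_2$, one has $\KK_{\omega}(\gamma^{1/p})\subset k_2$. Intersecting with $\KK_{\omega}(\gamma^{1/p})$ and invoking the preceding lemma $k_2\cap\KK_{\omega}(\gamma^{1/p})=\KK_{\omega}$ collapses $\KK_{\omega}(\gamma^{1/p})$ down to $\KK_{\omega}$, contradicting the hypothesis that $\KK_{\omega}(\gamma^{1/p})/\KK_{\omega}$ is ramified and hence nontrivial.

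For the unramified degree-$p$ case, I track ramification indices $e$ and residue degrees $f$ in the diamond of local fields having $\KK_{\omega}$ at the bottom, $k_2$ and $\KK_{\omega}(\gamma^{1/p})$ as two intermediate extensions, and their compositum $k_2(\gamma^{1/p})$ at the top. The given data are: $\KK_{\omega}(\gamma^{1/p})/\KK_{\omega}$ is totally ramified of degree $p$ (a ramified Kummer extension of prime degree is automatically totally ramified); $k_2=\KK_{\omega}(\zeta_3)$ and $k_2/\KK_{\omega}$ is totally ramified of degree $p^2$; and in the present case $k_2(\gamma^{1/p})/k_2$ has $e=1,\ f=p$. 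Multiplying through the towers $\KK_{\omega}\subset k_2\subset k_2(\gamma^{1/p})$ and $\KK_{\omega}\subset\KK_{\omega}(\gamma^{1/p})\subset k_2(\gamma^{1/p})$ then forces $f(k_2(\gamma^{1/p})/\KK_{\omega}(\gamma^{1/p}))=p$. However, the preceding lemma again gives $k_2\cap\KK_{\omega}(\gamma^{1/p})=\KK_{\omega}$, and both extensions being Galois over $\KK_{\omega}$ (Kummer, respectively cyclotomic), they are linearly disjoint; hence $k_2(\gamma^{1/p})/\KK_{\omega}(\gamma^{1/p})$ is the base change of the totally ramified extension $k_2/\KK_{\omega}$ and is itself totally ramified, yielding $f(k_2(\gamma^{1/p})/\KK_{\omega}(\gamma^{1/p}))=1$. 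This is the desired contradiction.

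The principal obstacle is the routine but delicate bookkeeping of ramification and residue degrees in the ramification diagram; the essential non-bookkeeping input is the preceding lemma, which provides the direct contradiction in the trivial case and the linear disjointness needed for the unramified degree-$p$ case.
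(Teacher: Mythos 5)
Your reduction to the two cases and your handling of the trivial case are fine (that is exactly what the preceding lemma is for). The unramified degree-$p$ case, however, hinges on a step that is false: from $k_2\cap\KK_{\omega}(\gamma^{1/p})=\KK_{\omega}$ and Galoisness you conclude that $k_2(\gamma^{1/p})/\KK_{\omega}(\gamma^{1/p})$, being the ``base change'' of the totally ramified $k_2/\KK_{\omega}$, is again totally ramified. Linear disjointness preserves degrees, but it does not preserve total ramification: over a local field the compositum of two linearly disjoint totally ramified extensions can be unramified over either of them. Already over $\Q_p$ ($p$ odd) the fields $\Q_p(\sqrt{p})$ and $\Q_p(\sqrt{up})$, with $u$ a non-square unit, are linearly disjoint and totally ramified, yet their compositum $\Q_p(\sqrt{p},\sqrt{u})$ is unramified over each. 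Note also that your claim is precisely the statement to be proved with the two sides of the diamond interchanged, so the argument is circular in substance.

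That the gap cannot be repaired without new input is shown by the fact that your case-2 argument never uses the standing hypothesis that $\gamma$ is real, while the theorem fails without it: take $\gamma=\zeta\cdot(1+\omega^p)$, where $1+\omega^p$ generates the unramified degree-$p$ extension of $\KK_{\omega}$. Then $\KK_{\omega}(\gamma^{1/p})/\KK_{\omega}$ is ramified, $k_2\cap\KK_{\omega}(\gamma^{1/p})=\KK_{\omega}$, and $k_2/\KK_{\omega}$ is totally ramified --- all the inputs you actually use --- yet $\zeta$ is a $p$-th power in $k_2$, so $k_2(\gamma^{1/p})=k_2((1+\omega^p)^{1/p})$ is unramified over $k_2$. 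The paper argues instead via Kummer theory exactly in order to bring reality in: if $k_2(\gamma^{1/p})/k_2$ were unramified of degree $p$, then $\gamma=(1+\omega^p)r_2^p$ with $r_2\in k_2$; either $r_2\in\KK_{\omega}$, contradicting the ramification of $\KK_{\omega}(\gamma^{1/p})/\KK_{\omega}$, or $k_2=\KK_{\omega}(r_2)$, which forces $\gamma/(1+\omega^p)=\zeta s^p$ with $s\in\KK_{\omega}$, an identity that already fails modulo $(\zeta-1)^2$ because $\gamma$ is real. (A minor point: you read $k_2$ as $\KK_{\omega}(\zeta_3)$, of degree $p^2$; the paper's own proofs treat $k_2$ as $\KK_{\omega}(\zeta_2)$, of degree $p$ over $\KK_{\omega}$. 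This discrepancy does not affect the verdict above.)
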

\begin{proof}
Assume that our extension is unramified. Since the only unramified extension of $k_2$ of degree $p$ is
$k_2 ((1+\omega^p)^{1/p})$, we conclude that $(1+\omega^p)r_2^p=\gamma$ for some $r_2\in k_2$.
Further, $r_2$ cannot be an element of $\KK_{\omega}$ because then the extension
$\KK_{\omega}(\gamma^{1/p})/\KK_{\omega}$ is unramified while it ramifies.

Hence, we have $r_2^p \in\KK_{\omega}$ while $r_2 \notin \KK_{\omega}$. Then we have $k_2 =\KK_{\omega} (r_2)$
and
$$\zeta s^p =\frac{\gamma}{1+\omega^p}, \ s\in\KK_{\omega} .$$
Again, since $\gamma$ is real, this equality does not hold $mod(\zeta -1)^2$
\end{proof}
\begin{corollary}
The extension $k_n (\gamma)/k_n$ ramifies.
\end{corollary}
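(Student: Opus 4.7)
The plan is to prove the corollary by induction on $n$, with the base case $n=2$ supplied by the preceding theorem (the cases $n=0,1$ being covered by the standing hypothesis on $\KK_{\omega_0}(\gamma^{1/p})/\KK_{\omega_0}$ and by the second lemma of this appendix). The inductive engine is that $k_n/k_{n-1}$ is totally ramified of degree $p$ for every $n\ge 1$, being a step of the cyclotomic $\Z_p$-tower, so ramification indices propagate cleanly through the $2\times 2$ diagram of fields $k_{n-1},\ k_n,\ k_{n-1}(\gamma^{1/p}),\ k_n(\gamma^{1/p})$.

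Assume inductively that $k_{n-1}(\gamma^{1/p})/k_{n-1}$ is ramified and suppose for contradiction that $k_n(\gamma^{1/p})/k_n$ is unramified. Either $\gamma^{1/p}\in k_n$, or $[k_n(\gamma^{1/p}):k_n]=p$ with this extension unramified. I would dispatch the nontrivial unramified case first, by multiplicativity of ramification indices. The two degree-$p$ subfields $k_n$ and $k_{n-1}(\gamma^{1/p})$ of $k_n(\gamma^{1/p})$ must be linearly disjoint over $k_{n-1}$ (otherwise they coincide, reducing to the first case), so the compositum has degree $p^2$ over $k_{n-1}$. Going through $k_n$ gives $e(k_n(\gamma^{1/p})/k_{n-1})=p\cdot 1=p$, which forces $k_n(\gamma^{1/p})/k_{n-1}(\gamma^{1/p})$ to be unramified of degree $p$. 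But this extension is generated by $\zeta_{n+1}$ over a field already containing $\zeta_{n}$, hence is totally ramified with uniformizer $\zeta_{n+1}-1$, contradiction.

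The first case, $\gamma^{1/p}\in k_n$, is handled via the cyclicity of $\mathrm{Gal}(k_n/k_0)\cong\Z/p^n\Z$: its unique degree-$p$ subfield is $k_1$, so $\gamma^{1/p}\in k_1=\KK_{\omega_0}(\zeta_2)$. Using the corollary to the first appendix lemma, $\gamma$ may be taken real; writing $\gamma=r^p$ with $r\in k_1$ and adjusting $r$ by a root of unity so that $r\equiv 1\pmod{(\zeta_2-1)^2}$, one reduces (after applying the norm to $\KK_{\omega_0}$) to a congruence of the shape $\zeta_1 s^p\equiv\gamma\pmod{(\zeta_1-1)^2}$, which cannot hold for a real $\gamma$. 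This is the same modular obstruction used in the second lemma of the appendix and in the proof of the preceding theorem.

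The main obstacle will be this first case: the normalization of $r$ and the descent modulo $(\zeta_1-1)^2$ must genuinely contradict reality, which requires careful bookkeeping of how the ramification of $\zeta_2$ in $k_1/\KK_{\omega_0}$ interacts with the conjugation action before the base obstruction can be applied. Once this is in place, the induction closes and ramification of $k_n(\gamma^{1/p})/k_n$ propagates up the entire tower.
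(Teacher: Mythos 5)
Your induction has a genuine gap exactly at the step that carries all the arithmetic content. In the unramified degree-$p$ case you argue by multiplicativity of ramification indices that $k_n(\gamma^{1/p})/k_{n-1}(\gamma^{1/p})$ would be unramified of degree $p$, and then dismiss this because that extension ``is generated by $\zeta_{n+1}$ over a field already containing $\zeta_n$, hence is totally ramified.'' That inference is false in general: adjoining $\zeta_{n+1}$ to a ramified degree-$p$ extension of $k_{n-1}$ can very well produce an unramified extension. Concretely, let $u\in k_{n-1}$ be a unit whose $p$-th root generates the unramified degree-$p$ extension of $k_{n-1}$ (an element of the shape $1+\omega_{n-1}^p$, as in Theorem 4.4 of the appendix), and put $F=k_{n-1}\bigl((\zeta_n u)^{1/p}\bigr)$. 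Then $F/k_{n-1}$ is totally ramified of degree $p$ (it is one of the ramified lines inside the bicyclic field $k_{n-1}(\zeta_n^{1/p},u^{1/p})$, whose unramified subextension is unique), yet $F(\zeta_{n+1})=F(\zeta_n^{1/p})$ contains $(\zeta_n u)^{1/p}\zeta_{n+1}^{-1}$, a $p$-th root of $u$, so $F(\zeta_{n+1})/F$ is unramified of degree $p$. In other words, the compositum of two totally ramified degree-$p$ extensions can acquire an unramified part, and pure ramification-index bookkeeping cannot rule this out; whether it happens for $F=k_{n-1}(\gamma^{1/p})$ depends on whether $\gamma$ is congruent, modulo $p$-th powers, to $\zeta_n$ times the distinguished unit $u$ --- which is precisely the possibility that must be excluded using the specific arithmetic of $\gamma$ (it is real and a unit), not its mere ramification behaviour one floor down.

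This is why the paper's proof of the corollary does not argue by index counting: it repeats the proof of the preceding theorem verbatim at level $n$ (substituting $\KK_\omega\to k_2$, $\zeta\to\zeta_2$, $k_2\to k_3$, and so on inductively). That is, if $k_n(\gamma^{1/p})/k_n$ were unramified, then since the unique unramified degree-$p$ extension of $k_n$ is $k_n\bigl((1+\omega_n^p)^{1/p}\bigr)$, one gets $\gamma=(1+\omega_n^p)r^p$ with $r\in k_n$; either $r$ lies in $k_{n-1}$, contradicting the inductive hypothesis that $k_{n-1}(\gamma^{1/p})/k_{n-1}$ ramifies, or $r$ generates $k_n$ over $k_{n-1}$, forcing a relation $\zeta_n s^p=\gamma/(1+\omega_n^p)$ with $s\in k_{n-1}$, which fails modulo $(\zeta-1)^2$ because $\gamma$ is real. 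Your treatment of the other case ($\gamma^{1/p}\in k_n$, reduced via cyclicity of $\mathrm{Gal}(k_n/k_0)$ to $k_1$ and killed by the same reality obstruction) is in the right spirit, but to close the main case you must reintroduce the explicit description of the unramified degree-$p$ extension and the congruence argument for the real unit $\gamma$; the ``obvious'' ramification of $\zeta_{n+1}$ over a field containing $\zeta_n$ is not available to you.
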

\begin{proof}
If we substitute $\KK_{\omega}$ by $k_2$, $\zeta \to \zeta_2$, and $k_2$ by $k_3$  in the proof of the theorem, then it is easy
to see that all the arguments of the proof will work. Thus, the extension $k_3 (\gamma)/k_3$ ramifies.
An obvious induction completes the proof of Corollary.
\end{proof}

\end{document}